\def\cup{\smallsmile}
\newcommand{\R}{{\mathbb R}}  
\newcommand{\Z}{{\mathbb Z}}  
\newcommand{\Q}{{\mathbb Q}}
\newcommand{\E}{{\mathcal E}}
\newcommand{\dinv}{d^{-1}}
\newcommand{\apl}{A_{PL}}
\newcommand{\cofreeE}{{\mathbb E}}
\newcommand{\s}{ }
\newcommand{\lie}{\mathcal{L}\!{\it ie}}    
\newcommand{\eil}{\mathcal{E}\!{\it il}}    
\newcommand{\inv}{ {{\text -}1}}      
\newcommand{\overtie}[2]{             
\begin{aligned} \displaystyle         %
\operatornamewithlimits{              %
 \begin{aligned} #1                   %
 \end{aligned} }_{#2}                 %
\end{aligned} }
\newcommand{\biggraphpp}[3]{ \ensuremath{
 \begin{xy}                           
  (0,-3)*+UR{\scriptstyle #1}="a",    
  (4,3)*+UR{\scriptstyle #2}="b",     
  (8,-3)*+UR{\scriptstyle #3}="c",    
  "a";"b"**\dir{-}?>*\dir{>},         %
  "b";"c"**\dir{-}?>*\dir{>}          %
 \end{xy}                             %
} }
\newcommand{\graphmp}[3]{ \ensuremath{
 \begin{xy}                           
  (0,-2)*+UR{\scriptstyle #1}="a",    
  (3,3)*+UR{\scriptstyle #2}="b",     
  (6,-2)*+UR{\scriptstyle #3}="c",    
  "b";"a"**\dir{-}?>*\dir{>},         
  "b";"c"**\dir{-}?>*\dir{>}          %
 \end{xy}                             %
} }
\newcommand{\linep}[2]{ \ensuremath{  %
 \begin{xy}                           
  (0,-2)*+UR{\scriptstyle #1}="a",    
  (6,2)*+UR{\scriptstyle #2}="b",     
  "a";"b"**\dir{-}?>*\dir{>},         
 \end{xy}                             %
} }
\theoremstyle{plain}                          
\newtheorem{theorem}{Theorem}[section]                          
\newtheorem{proposition}[theorem]{Proposition}                          
\newtheorem{lemma}[theorem]{Lemma}   
\newtheorem{corollary}[theorem]{Corollary}
\theoremstyle{definition}                          
\newtheorem{definition}[theorem]{Definition}  
\theoremstyle{remark}  
\newtheorem{example}[theorem]{Example}                                                          
\newtheorem{remark}[theorem]{Remark}
\newcommand{\refT}[1]{Theorem~\ref{T:#1}}
\newcommand{\refC}[1]{Corollary~\ref{C:#1}}
\newcommand{\refP}[1]{Proposition~\ref{P:#1}}
\newcommand{\refL}[1]{Lemma~\ref{L:#1}}
\newcommand{\refE}[1]{Equation~\ref{E:#1}}
\begin{document}

\title[Lie coalgebras II: Hopf invariants]{Lie coalgebras and  rational homotopy theory II: Hopf invariants}
\author[D. Sinha]{Dev Sinha}
\address{Department of Mathematics\\
University of Oregon\\
Eugene, OR
97403}
\email{dps@math.uoregon.edu}

\author[B. Walter]{Ben Walter} 
\address{
Department of Mathematics \\ Middle East Technical University, Northern Cyprus Campus \\
Kalkanli, Guzelyurt, KKTC, Mersin 10 Turkey
}
\email{benjamin@metu.edu.tr}

\subjclass{55P62; 16E40, 55P48.}
\keywords{Hopf invariants, Lie coalgebras, rational homotopy theory, graph cohomology}
\maketitle

We give a new solution of the ``homotopy periods'' problem, as highlighted by Sullivan \cite{Sull77}, 
which places explicit geometrically meaningful 
formulae  first dating back to Whitehead \cite{Whit47} in the context 
of Quillen's formalism for rational homotopy theory and Koszul-Moore duality
\cite{Quil70}.  


We build on \cite{SiWa06}, which uses 
graph coalgebras
to breathe combinatorial life into the category of  
differential graded
Lie coalgebras.  We use that framework to  construct a new
isomorphism of Lie coalgebras $\eta : H_{*-1}(\E(A^{*}(X))) \to {\rm Hom} 
(\pi_{*}(X), \Q)$ for $X$ simply connected.
Here $A^{*}(X)$ denotes a model for commutative rational-valued  
cochains on $X$, and  $\E$
is isomorphic to the Harrison complex.  While the existence of such an
isomorphism follows from Quillen's seminal work in rational  
homotopy theory,
giving a direct, explicit isomorphism has benefits in both theory and  
applications.   

On the calculational 
side, we are able to evaluate Hopf invariants on iterated Whitehead products 
in terms of the ``configuration pairing.''  
We can use this, for example, to take the well-known calculation of the 
rational homotopy groups of a wedge of  
spheres as a free
graded Lie algebra and give a geometric
algorithm to determine which  element of that algebra a given map
would correspond to.  
On the formal side, we are able to understand
the naturality of these maps in the long exact sequence of a  
fibration.  For applications, we can show for example that the rational
homotopy groups   of homogeneous spaces
are detected by classical linking numbers.
Ultimately  all of these Hopf invariants
are essentially generalized linking invariants, as we explain in Section~\ref{S:mfld}.

We proceed in two steps, first using the classical bar complex
to define integer-valued homotopy functionals which coincide with
evaluation of the cohomology of $\Omega  X$ on the looping of a map
from $S^{n}$ to $X$.   This was also the starting point of Hain's work  
\cite{Hain84} using Chen integrals,
but our definition of functionals is clearly distinct from his.    
We establish basic properties and give examples
using the classical bar complex. 
In the second part, we use the Harrison complex on commutative cochains,
and thus must switch to rational coefficients.
Using our graph coalgebraic presentation, 
we show that a product-coproduct formula established geometrically in
the bar complex descends to the duality predicted by Koszul-Moore theory.

Our basic, apparently new, observation is that calculations in  
bar complexes
yield the Hopf invariant
formula of Whitehead \cite{Whit47}, as well as those of Haefliger, Novikov and  
Sullivan.  This observation could have
been made fifty years ago.  Our approach
incorporates  a modern viewpoint by directly using  
Harrison-Andr\'e-Quillen homology,
the standard algebraic bridge from commutative algebras
to Lie coalgebras, with the new graphical presentation essential
for a self-contained development.   
One direction we plan to pursue further  
is the use of Hopf invariants
to realize Koszul-Moore duality isomorphisms in general.  A second  
direction we plan to pursue
is that of  spaces which are not simply connected, where  our graph 
coalgebra models seem relevant even for
$K(\pi, 1)$ spaces.

The problem of finding ``homotopy periods'' has been addressed before  by Boardman-Steer \cite{BoSt66}, Sullivan  
\cite{Sull77}, Haefliger
\cite{Haef78}, Hain \cite{Hain84} and Novikov \cite{Novi88}, using a  
wide range of tools.
We relate and
compare our approach with these at the end of the paper.
In summary, we view the cofree  Lie coalgebra functor as the best for
unifying formalism and geometry, showing how Koszul duality governs
homotopy groups through ``linking'' of cochain data as
explained in Section~\ref{S:mfld}.

\section{Hopf invariants from the bar complex.}

In these first sections when dealing exclusively with the bar complex, we only
need associative cochains and so work integrally. 
 Later when dealing with the Lie coalgebra model of a space,
we switch to exclusive use of commutative cochains and work over the rationals.
For consistency with historical practice we use $C^{*}(X)$ to denote the usual cochains
with cup product on a simplicial set $X$, or equivalently any subalgebra whose 
inclusion induces an isomorphism on cohomology.
Similarly, we let $A^{*}(X)$ denote the $PL$ forms on $X$ or equivalently a subalgebra
model.  It is an unfortunate accident of notational history that $A^{*}$ is commutative while
$C^{*}$ is associative but not commutative (except in an $E_{\infty}$ sense).

\begin{definition}
Let $B(R)$ denote the bar complex on an associative differential graded algebra $R$,
defined in the standard way as the total complex of a bicomplex
spanned by monomials $x_{1}| \cdots| x_{n}$, where the $x_{i}$ have positive degree,
multilinear in each variable.  The
``internal'' differential $d_{R}$ is
given by extending that of $R$ by the Leibniz rule, and the ``external'' differential $d_{\mu}$
is defined by removing bars and multiplying; we write $d_B$ for the total differential
in the bar complex when it is not otherwise clear by context.
If $x$ is a monomial in the bar complex, the internal degree of $x$ is the sum of the degrees
of its component elements, and the weight of $x$ is its number of component elements.  The total
degree of $x$ is its internal degree minus its weight.

Let $B(X)$ denote $B(C^{*}(X))$, and let $H^{*}_{B}(X)$ denote $H_{*}(B(C^{*}(X)))$.
\end{definition}

Throughout the paper, we generally suppress the suspension and desuspension
operators $s$ and $s^{\inv}$ which are used in the definition of the bar complex
and related complexes, 
as where they need to appear is always determined by context. We may include them when 
for example they facilitate computing signs.
Also, we will assume throughout that $X$ is a simply-connected space.  We leave it to 
a sequel to address the non-simply-connected case, which requires new foundational
understanding of Lie coalgebras.  Those foundations are being worked out by the second author.   
We have done some preliminary calculations, in which our techniques
work beyond the nilpotent setting.

\medskip

The classical work of Adams-Hilton and 
Eilenberg-Moore established that $H_{B}^{*}(X)$ is isomorphic
additively to the cohomology of the based loopspace of $X$. 
We now show 
that the homology of the bar complex is also the natural setting for Hopf invariants for
an arbitrary $X$, extending the  invariants for spheres and other suspensions.
Topologically, we are passing from a map $f: S^{n} \to X$ to its looping $\Omega f :
\Omega S^{n} \to \Omega X$, on which we evaluate cohomology classes from
the bar complex.  But the way in which we do the evaluation, and the properties
we derive, have not to our knowledge been previously considered.  We start
with a standard calculation in the bar complex for the sphere, including 
a proof because the central ingredient -- weight reduction -- yields 
 a method for explicit computation of Hopf invariants.

\begin{lemma}\label{L:key}
$H^{n-1}_{B}(S^n)$ is rank one, generated by an element of weight one corresponding
to the generator of $H^n(S^n)$. 
\end{lemma}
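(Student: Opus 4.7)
The plan is to filter $B(C^*(S^n))$ by weight. Decompose the bar differential as $d_B=d_R+d_\mu$, where $d_R$ preserves weight and $d_\mu$ lowers weight by one; the increasing filtration by weight-$\leq w$ is thus preserved by $d_B$, with $d_R$ acting as $d_0$ on the associated graded.

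Since $H^*(S^n)$ is torsion-free, the integral K\"unneth formula identifies the $E_1$ page at weight $w$ with $(\tilde H^*(S^n))^{\otimes w}$. In the reduced bar complex every tensor factor lies in positive degree, so the only surviving class is the $w$-fold tensor power of the fundamental class $v$, which sits in internal degree $wn$ and hence total degree $w(n-1)$. Consecutive nonzero weights thus occupy total degrees separated by $n-1\geq 1$; a differential $d_r$ ($r\geq 1$) raises total degree by one and lowers weight by $r$, so its nonvanishing would force $r(n-1)=1$, which is impossible for $n\geq 2$. Thus $E_\infty=E_1$, and in total degree $n-1$ the page has a single $\mathbb{Z}$ at weight one, corresponding to the generator $v\in H^n(S^n)$. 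Since $E_1$ carries at most one $\mathbb{Z}$ per total degree, the spectral sequence converges strongly despite the weight filtration being unbounded above.

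Equivalently, one may carry out weight reduction cocycle by cocycle, as signposted for later use in computing Hopf invariants explicitly. Given a cocycle $\omega$ of weight $w\geq 2$ and total degree $n-1$ (so internal degree $n-1+w$), the weight-$w$ part of $d_B\omega=0$ reads $d_R\omega=0$; the K\"unneth/degree argument above, since $n-1+w<wn$ for $w\geq 2$, forces $\omega=d_R\tau$ for some weight-$w$ chain $\tau$. Then $d_B\tau=\omega\pm d_\mu\tau$, so $\omega$ is $d_B$-cohomologous to $\mp d_\mu\tau$, which has weight $w-1$ and is again a cocycle via $d_R d_\mu=-d_\mu d_R$ together with $d_\mu\omega=0$. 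The main obstacle, beyond tracking signs, is making the iteration terminate cleanly at weight one without extra relations creeping in from higher-weight coboundaries landing in weight one; this bookkeeping is most efficiently packaged by the spectral sequence argument above.
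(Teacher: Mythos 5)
Your proof is correct and rests on the same key idea as the paper's: filter by weight and use the K\"unneth theorem to see that the weight-$w$ cohomology of the internal differential is concentrated in internal degree $wn$, hence total degree $w(n-1)$, so only weight one can contribute in total degree $n-1$. The paper carries out exactly the cocycle-by-cocycle weight reduction of your second paragraph (applied to the top-weight part of a general cocycle, and then once more to a cobounding chain to get injectivity); your spectral-sequence formulation is just a tidier packaging of the same induction.
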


\begin{proof}
Suppose $\alpha \in B^{n-1}(S^{n})$ is a cycle.
 Since $\alpha$ has finitely many terms, its terms have maximal 
weight $k$.   Write $\alpha|_k$ for the weight $k$ terms of $\alpha$.  If $k>1$ then 
its internal differential $d_{C^{*}}(\alpha|_k) = 0$, so  
$\alpha|_k$ gives a cocycle in $\otimes_{k}\bar C^{*}(S^n)$.  By the K\"unneth theorem, 
$\otimes_{k}\bar C^{*}(S^n)$
has no homology in degree $n$, so $\alpha|_k$ is exact in $\otimes_{k}\bar C^{*}(S^n)$. 
Any choice of cobounding expression will determine a $\beta \in B(S^n)$ with 
$d_{C^{*}}\beta = \alpha|_k$.  Therefore $\alpha - d_B\beta$ is a lower weight expression 
in $B^{n-1}(S^n)$ homologous to $\alpha$.  
Inductively, we have that $\alpha$ is homologous to a cycle
of weight one, so the map from $H^{n}(S^{n})$ to $H^{n-1}_{B}(S^{n})$ including
the weight-one cocycles is surjective. 

Applying this weight-reduction argument to a cochain $\beta\in B^{n}(S^n)$ 
with $d \beta = x$
for $x$ weight one, we see the map from $H^{n}(S^{n})$ to $H^{n-1}_{B}(S^{n})$
is injective as well.
\end{proof}

\begin{definition}\label{D:int_B}
Let $\gamma \in B^{n-1}(S^n)$ be a cocycle.  Define $\tau(\gamma) \simeq \gamma$ 
to be a choice of weight one cocycle to which $\gamma$ is cohomologous.

Define $\int_{B(S^n)}$ to be the map from cocyles in $B^{n-1}(S^n)$ to $\mathbb{Z}$ given by
$\int_{B(S^n)} \gamma = \int_{S^n} \tau(\gamma)$, where $\int_{S^n}$ denotes evaluation
on the fundamental class of $S^n$.
\end{definition}

From \refL{key} it is immediate that the map $\int_{B(S^n)}$ is well defined and induces 
an isomorphism $H_{B}^{n-1}(S^{n}) \cong \Z$.

The standard way to use cohomology to define homotopy functionals is to pull back
and evaluate.  This is essentially how we define our generalized Hopf invariants, 
allowing for a homology
between the cocycle we pull back and one which we know how to evaluate.

\begin{definition}\label{D:hopfform}
Define the Hopf pairing $\langle \;,\;  \rangle_{\eta} :  H^{n-1}_{B}(X) \times  \pi_{n}(X) \to \Z$ 
by sending
$[\gamma] \times [f]$ to $\int_{B(S^{n})} f^{*}(\gamma)$.

We call $\tau(f^*(\gamma))$ the Hopf cochain (or form) of $\gamma$ pulled back by $f$.
We name the associated maps $\eta : H^{*-1}_{B}(X) \to {\rm Hom}(\pi_{*}(X), \Z)$ 
and $\eta^{\dagger} : \pi_{*}(X) \to {\rm Hom}(H^{*-1}_{B}(X), \Z)$.  We say
$\eta(\gamma) \in {\rm Hom}(\pi_{*}(X), \Z)$ is the Hopf invariant associated to $\gamma$. 
\end{definition}

A choice of Hopf cochain is not unique, but the corresponding Hopf invariant is well-defined.  It is
immediate that the Hopf invariants are functorial.   Moreover, the definitions
hold with any ring coefficients.  Topologically we have the following
interpretation.

\begin{proposition}\label{P:spacelevel}
The value of the Hopf invariant associated to a cocycle $\gamma$ in the bar complex on some map $f$, namely
$\eta(\gamma)(f)$, is equal to $[\gamma]\bigl(\Omega f_* [\Omega S^n]\bigr)$, the value of the cohomology class given
by $\gamma$ in $H^{n-1}(\Omega X)$ on the image under $\Omega f$ of the
fundamental class of $H_{n-1}(\Omega S^{n})$.
\end{proposition}

\subsection{Examples}\label{examples}

\begin{example}
A cocycle of weight one in $B(X)$ is just a closed cochain on $X$, which may be pulled back and 
immediately evaluated.  
Decomposable elements of weight one in $B(X)$ are null-homologous, 
consistent with the fact that products evaluate trivially on the Hurewicz homomorphism.
\end{example}

\begin{example}\label{e2}
Let $\omega$ be a generating $2$-cocycle on $S^{2}$ and $f : S^{3} \to S^{2}$.  
Then $\gamma =  -\s\omega | \s\omega$ is a cocycle in $B(S^{2})$ which $f$ pulls back
to $- \s f^{*} \omega | \s f^{*} \omega$, 
a weight two cocycle of total degree two on $S^{3}$.
Because $f^{*} \omega$ is closed and of degree two on $S^{3}$, it is exact. 
Let $\dinv f^{*} \omega$ be a choice of a cobounding cochain.  Then 
$$d_{B} \left(\s\dinv f^{*} \omega | \s f^{*} \omega \right) = 
\s f^{*} \omega |  \s f^{*} \omega \ +\ 
\s \bigl(\dinv f^{*} \omega \cup f^{*} \omega\bigr).$$
Thus $f^*\gamma$ is homologous to $\s\bigl(\dinv f^{*} \omega \cup f^{*} \omega\bigr)$,
and the corresponding Hopf invariant is $ \int_{S^{3}} {\dinv f^{*} \omega \cup f^{*} \omega}$,
which is  the classical formula for Hopf invariant given by Whitehead \cite{Whit47} 
(and generalized to maps from arbitrary domains by O'Neill \cite{ONei79}).

Expressions involving choices of $\dinv$ for some cochains will be a feature of
all of our formulae.  On the sphere one can make this 
explicit as in the proof of the Poincar\'e Lemma, as Sullivan pointed out  when defining similar formulae in
Section~11 of \cite{Sull77}, .
\end{example}

\begin{example}\label{EX:hopf on brackets}
Let $X= S^{n} \vee S^{m}$ and let $x$ be a cochain representative for a generator of  $H^{n}(S^{n})$ 
and similarly $y$ on $S^{m}$.
Then $\gamma = \s x|\s y$ is a cocycle in $B(X)$.  
Let $f : S^{n+m-1} \to S^{n} \vee S^{m}$ be the universal 
Whitehead product.  
More explicitly  let $p_{1} : D^{n} \times D^{m} \to S^{n}$ be projection 
onto $D^{n}$ followed by the canonical quotient map,
and let $p_{2} :  D^{n} \times D^{m} \to S^{m}$ be defined similarly.  Decompose 
$S^{n+ m - 1}$ as 
$$S^{n+m-1} = \partial (D^{n} \times D^{m}) = 
 D^{n} \times S^{m-1}\  \bigcup\  S^{n-1} \times D^{m}.$$
Then $f|_{D^{n} \times S^{m-1} } = p_{1}|_{D^{n} \times S^{m-1} }$ and 
$f|_{S^{n-1} \times D^{m}} = p_{2}|_{S^{n-1} \times D^{m}}$.

Proceeding as in the previous example, 
$\langle \gamma, f\rangle_\eta = (-1)^{|x|+1} \int_{S^{n+m-1}} \dinv f^{*}x \cup f^{*}y$.  
But these cochains extend to 
$D^{n} \times D^{m}$.  Namely,  $\dinv p_{1}^{*} x$ on $D^{n} \times D^{m}$ restricts 
to $\dinv f^{*} x$, and similarly $f^{*}y$ is the restriction of $p_{2}^{*} y$. 
We evaluate as follows:
$$(-1)^{|x|+1}\int_{S^{n+m-1}} \kern -20pt \dinv f^{*}x \cup f^{*}y =
\int_{\partial (D^{n} \times D^{m})}  \kern -25pt 
 (\dinv p_{1}^{*}x \cup p_{2}^{*}y)\big|_{\partial (D^{n} \times D^{m})} 
{=} \int_{D^{n} \times D^{m}} \kern -20pt p_{1}^{*} x \cup p_{2}^{*} y 
= \int_{D^{n}} \!\! x \cdot \int_{D^{m}}\!\!  y = 1.$$
The change in sign in the first equality above is due to the change in orientation 
on the fundamental
class induced by the isomorphism $S^{n+m-1} \cong \partial (D^n \times D^m)$.

 We conclude that the Hopf invariant of $\gamma$ detects the Whitehead product,
 which recovers a theorem from the third page of 
\cite{Haef78}.  This first case of evaluation of a Hopf invariant on a Whitehead product
will be generalized below.
\end{example}

\begin{example}\label{E:arbwt1}
For an arbitrary $X$ and cochains $x_{i}, y_{i}$ and $\theta$ on $X$ with 
$d x_{i}  = d y_{i} = 0$ and $d \theta = \sum (-1)^{|x_i|} x_{i} \cup y_{i}$, the cochain
$\gamma = \sum \s x_{i} | \s y_{i} + \s\theta \in B(X)$ is closed. 
The possible formulae for the Hopf invariant are all of the form  
$$\langle\gamma,\ f\rangle_\eta = \int_{S^{n}} \left( f^{*} \theta - 
   \sum \left((-1)^{|x_i|} t \cdot \dinv f^{*} x_{i} \cup f^{*} y_{i} 
         + (1-t) \cdot f^{*} x_{i} \cup \dinv f^{*} y_{i}\right) \right),$$
for some real number $t$.
This generalizes a formula given in the Computations section of \cite{GrMo81}
and is also present in \cite{Haef78}.

By choosing $t = \frac{1}{2}$ we see that reversing the order to consider $\sum \s y_{i} | \s x_{i}$
will yield the same Hopf invariant, up to sign.  Thus $\sum x_{i} | y_{i}  \mp y_{i} | x_{i}$ yields
a zero Hopf invariant.  Indeed, there are many Hopf invariants
which are zero, a defect which will be remedied by using the Lie coalgebraic bar construction.
\end{example}

\begin{example}\label{E:wt2}

In applications, Hopf forms are easily computed using a weight reduction technique
introduced in the proof of Lemma~\ref{L:key}.  
The bigrading of $B(X)$ is used as in the following example, illustrated in Figure~\ref{f1}.

Suppose there is a weight three cocycle in $B(X)$ of the form 
  $$\gamma = \s x_{1} | \s x_{2} | \s x_{3} - \s x_{12} | \s x_{3} + \s x_{123},$$
where $d x_{i} = 0$, $d x_{12} = x_{1} \cup x_{2}$, $d x_{123} = x_{12} \cup x_{3}$
and $x_{2} \cup x_{3} = 0$, $x_1$ has odd degree
and $x_2$ has even degree.  
Consider the element 
$$\alpha = \s\dinv f^*x_1 | \s f^* x_2 | \s f^* x_3 
   + \s\dinv\left(\dinv f^* x_1 \cup f^* x_2 - f^* x_{12} \right)| \s f^* x_3.$$ 
Here we observe that 
$(\dinv f^* x_1 \cup f^* x_2 - f^* x_{12} )$ is closed and thus exact in order to 
know that we may find a $\dinv$ for it in $C^{*}(S^n)$.
We express  $d_{B} (\alpha) = (d_{C^*X} + d_\mu)(\alpha)$ in  $B(S^n)$, 
which is naturally a second-quadrant bicomplex, as follows.

\begin{figure}[ht]\label{f1}

\begin{center} 
\psfrag{A}{$d_C$}
\psfrag{B}{$d_\mu$}
\psfrag{C}{$- \s f^*x_1 | \s f^* x_2 | \s f^* x_3$}
\psfrag{D}{$\s \dinv f^* x_1| \s f^* x_2 | \s f^* x_3$}
\psfrag{E}{$\s (\dinv f^* x_1 \cup f^*x_2) | \s f^* x_3$}
\psfrag{H}{$- \s(\dinv f^*x_1 \cup f^*x_2 -f^* x_{12}) | \s f^*x_3$}
\psfrag{F}{$\s\dinv\left(\dinv f^*x_1 \cup f^*x_2 - f^*x_{12} \right)| \s f^*x_3$}
\psfrag{G}{$- \s\dinv\left(\dinv f^*x_1 \cup f^*x_2 - f^*x_{12} \right) \cup f^* x_3$}

$$\includegraphics[width=15cm]{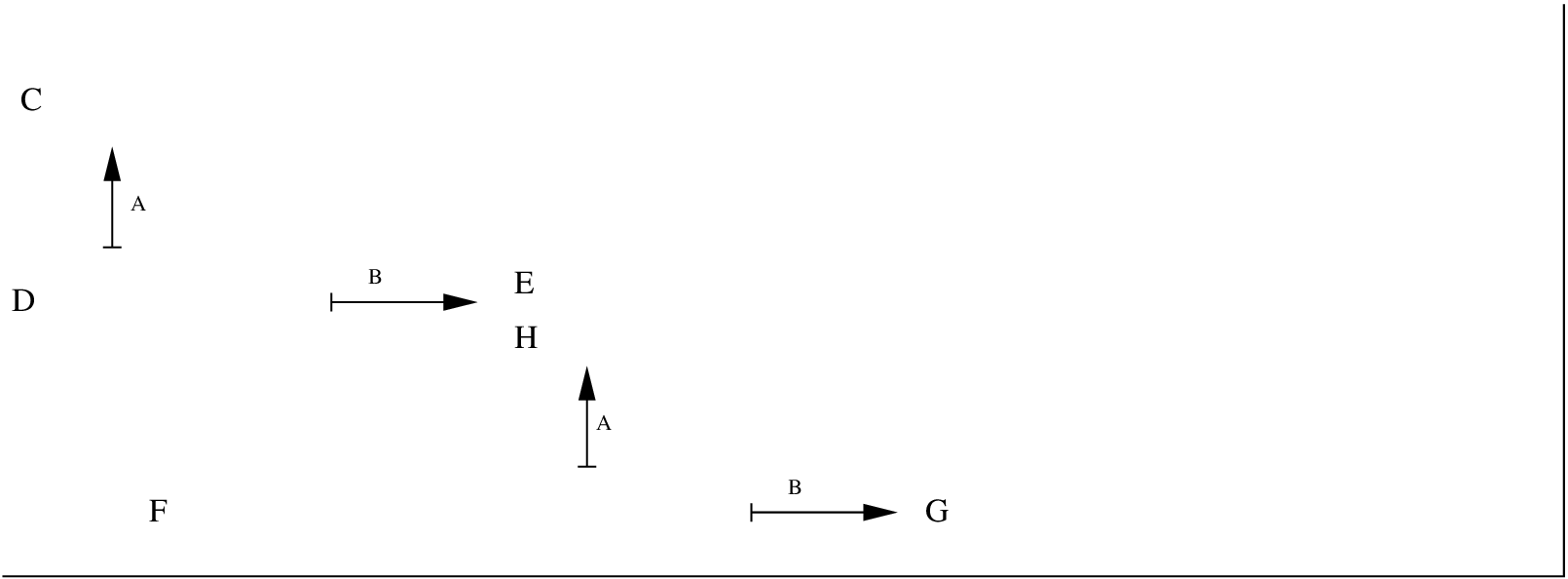}$$

\end{center}
 \caption{Calculation of $d_{B}(\alpha)$ in $B(S^{n})$ from Example~\ref{E:wt2}.}

\end{figure}

\medskip

Observe that 
$$f^*(\gamma) + d_{B} (\alpha) = 
 \s\bigl(f^* x_{123} - 
      \dinv\left(\dinv f^* x_1 \cup f^* x_2 - f^* x_{12} \right) \cup f^*x_3\bigr).$$ 
The right-hand side is a weight one cocycle in $B(S^n)$,
meaning that it gives a Hopf cochain of $\gamma$ pulled back by $f$.
\end{example}

\subsection{Hopf invariants as generalized linking numbers}\label{S:mfld}
We explain the geometric significance of these formulae, which is one of the main highlights of
this approach.
The classical Hopf invariant was understood as a linking number, and the present 
generalization is rightly understood in terms of linking numbers as well.   

The story is clearest when  $X$ is a manifold with a collection $\{W_{i}\}_{i=1}^{k}$ 
of proper oriented submanifolds of codimension two or greater, disjoint from each
other and the basepoint of $X$.    Denote associated
Thom cochains by $\omega_{i}$.   (There are various settings in which this can be done, the simplest
being to define a Thom cochain as  a 
``partially defined'' cochain whose value on a smooth singular chain is given by an intersection count.  We will
instead use de Rham theory, where constructions of Thom forms are well-documented as in 
Chapter 1.6 of \cite{BoTu82}).
Because the $W_{i}$
are disjoint, the $\omega_{i}$ can be chosen with disjoint support.    Then
$\gamma = \omega_{1} | \cdots | \omega_{k}$ is a cocycle in $B(X)$.  If
$f:S^d\to X$ where $d$ is the sum of the codimensions of
the $W_i$ minus $k$, then an associated Hopf form is
$$\tau(f^*(\gamma)) =
f^{*}\omega_{1} \wedge \dinv \left( f^{*} \omega_{2} \wedge \dinv \left( \cdots \cup \left(
f^{*}\omega_{k-1} \wedge \dinv f^{*}\omega_{k} \right) \cdots \right) \right).$$

Because $f$ is based we can   choose a diffeomorphism of 
$S^{d}\backslash\{\text{basepoint}\}$ with $\R^{d}$
and consider the  submanifolds $f^{-1}W_i \subset \R^{d}$.  In general if $\partial P = Q$ then we can construct $d^{\inv}$
of the Thom form of $Q$ as a Thom form of $P$.
In this case we define $d^{\inv} f^*\omega_k$ by first taking the Thom form of the manifold $E_{k}$ given by 
extending $f^{\inv} W_k$ upwards in $\R^d$ (say in the first coordinate) and eventually outside of a fixed 
ball ${B}$ containing
 all of the $f^{\inv}W_i$.    The Thom form of the translated copy of $f^{-1}W_k \subset \R^{d}$ can
then be cobounded by a form whose support is outside of ${B}$, so   $d^{\inv} f^*\omega_k$ is the
sum of this cobounding form outside of ${B}$ along with a Thom form of $E_{k}$.

Because $f^{*} \omega_{k-1}$ has support in $B$, its wedge product with
$ \dinv f^{*}\omega_{k}$ is equal to its wedge product with the Thom form  of $E_{k}$.  
Because the wedge product of Thom forms is a Thom form of the intersection,
this will be a Thom form of the submanifold of 
$f^{-1} W_{k-1}$ of points which lie above (in $\R^d$) 
some point of $f^{-1} W_{k}$.  
Proceeding in this fashion,
the Hopf invariant of $f$ will in the end
be the generic count of collections of a point in $f^{-1} W_{1}
\subset \R^{d}$ which
lies above a point in $f^{-1} W_{2}$ which in turn lies above a point in $f^{-1} W_{3}$, etc.

The resulting count is a generalized linking number, equal to the degree of the Gauss map
from $\prod_{k} f^{-1} W_{i}$ to $\prod_{k-1} S^{d-1}$ sending $(x_{1}, \ldots, x_{k})$
to $(\cdots, \frac{x_{i} - x_{i-1}}{||x_{i} - x_{i-1}||}, \cdots)$.   This Gauss map factors
through the universal Gauss map sending  $\prod f^{-1}(W_{i})$ to the 
ordered configuration space  ${\rm Conf}_{k}(\R^{d})$ by sending a collection
of points with one point in each $f^{-1} W_{i}$ to the configuration given by their
images in $\R^{d}$.  These degrees  
are encoding the image in homology of the  fundamental class  of 
$\prod f^{-1}(W_{i})$ included in ${\rm Conf}_{k}(\R^{d})$.  As we discuss in 
Section~\ref{S:compare}, these Hopf invariants arising from Thom classes of
disjoint submanifolds agree with some of those defined by Boardman and Steer
\cite{BoSt66}, as applied by Koschorke \cite{Kosh97}.

\medskip

If the submanifolds $W_{i}$ are not disjoint, then their linking numbers need to include
``correction terms'' given by submanifolds which bound their intersections.  The simplest
example is that of some $W_{1} \cap W_{2} = \partial T$.  Then there is a cocycle in the bar
complex of the form $\omega_{1} | \omega_{2} \pm \theta$, where as before $\omega_{i}$
are Thom cochains of the $W_{i}$ and now $\theta$ is a Thom cochain of $T$ so that $d\theta
= \omega_{1} \wedge \omega_{2}$.  

This cocyle in the bar complex is a special case of Example~\ref{E:arbwt1}.  The associated Hopf invariant is 
$ \int_{S^{d_{1} + d_{2} - 1}} {\dinv f^{*} \omega_{1} \wedge f^{*} \omega_{2} \mp f^{*}\theta},$
where the $d_{i}$ are the codimensions of the $W_{i}$.
An individual $f$ may be assumed to be transverse to $W_{1}$ and $W_{2}$, in which case their preimages
are disjoint, and the first part of this integral is their linking number.  But through a homotopy $H$ from $f$
to another map the preimages of the $W_{i}$ may intersect at some time.  

The change in linking
number which occurs because of this intersection
is accounted for by a corresponding change in $\int_{S^{d_{1} + d_{2} -1}} f^{*} \theta$, which counts the
preimages of $T$.    See Figure~\ref{F1}, in which for $f$ (that is, $H|_{t=0}$) the Hopf invariant has a contribution
of $\pm 1$ from  $ \int_{S^{d_{1} + d_{2} - 1}} {\dinv f^{*} \omega_{1} \wedge f^{*} \omega_{2}}$
and a contribution of $\pm 1$ from  $\int_{S^{d_{1} + d_{2} -1}} f^{*} \theta$, but the Hopf invariant of 
the $H|_{t=1}$ has two contributions of $\pm 1$ of the latter form, counting preimages of $T$.

\begin{figure}[ht]\label{F1}

\def\svgwidth{8.5cm}
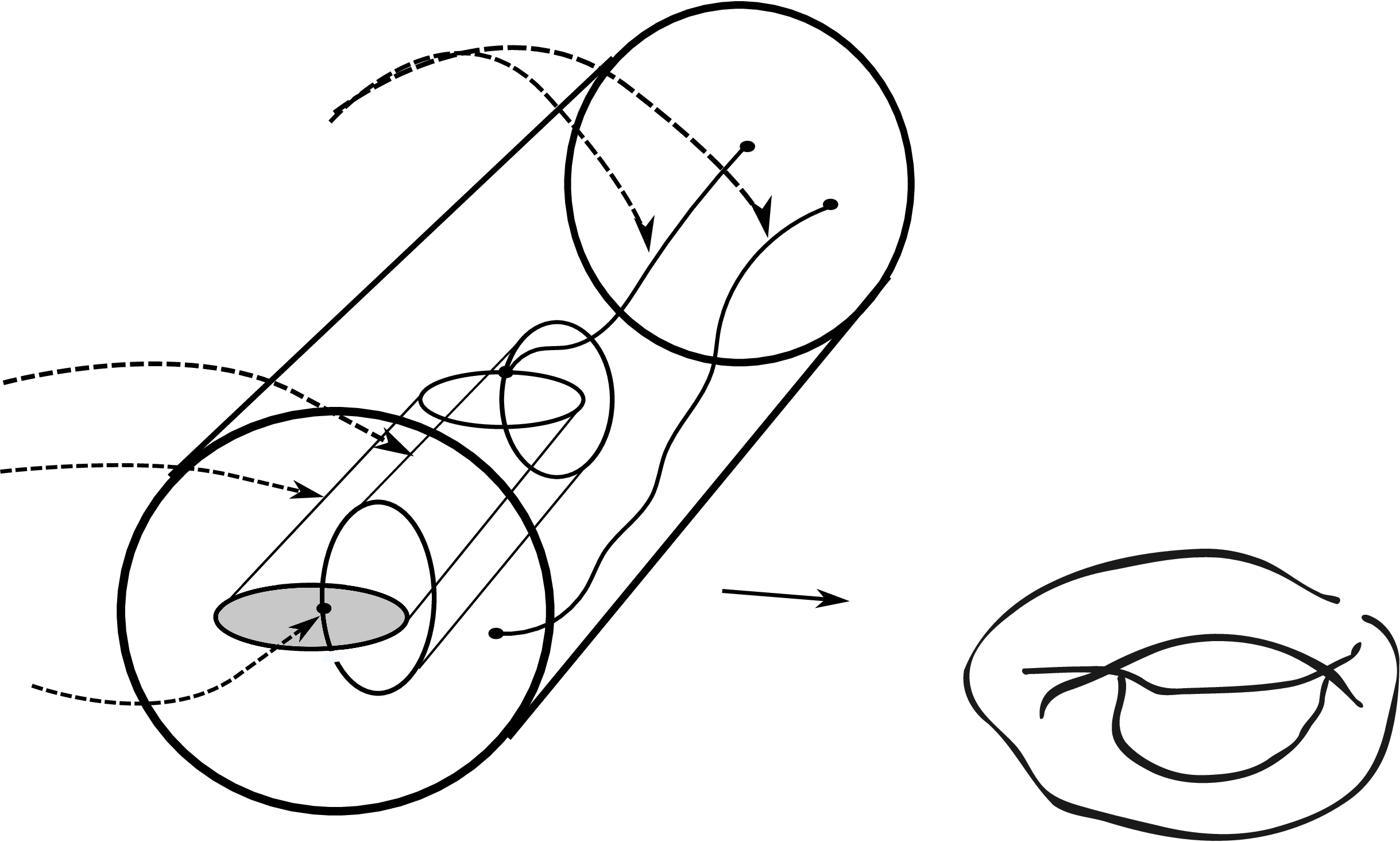
\caption{An illustration of ``linking with correction'' remaining constant through a homotopy.}
\end{figure}

All of the formulae we gave in the Section~\ref{examples} 
may be interpreted in the language of intersection, 
linking, and boundary behavior of submanifolds, when the classes
in question are Thom classes (which they may always be assumed to be
if we allow submanifolds with singularities).
Indeed, generalizing the definition of Hopf invariants through linking numbers
was the starting point for our project.  It was in elucidating this geometry that 
we were led to the formalism of Lie coalgebras, with the cohomology of 
configuration spaces playing a central role.

\subsection{The generalized Hopf invariant one question}

From \refP{spacelevel} and the adjoint of the Milnor-Moore theorem 
that 
$\pi_{*}(\Omega X) \otimes \Q \to H_{*}(\Omega X; \Q)$ is injective \cite{MiMo65},
we have the following.

\begin{proposition}\label{P:hopfinvar}
If $X$ is simply connected,
the map $\eta_{B} : H_{B}^{*-1}(X; \Q) \to {\rm Hom}(\pi_{*}(X), \Q)$ is surjective.
In particular, the map $\eta_{B} : H_{B}^{*-1}(X) \to {\rm Hom}(\pi_{*}(X), \Z)$ is 
full rank.
\end{proposition}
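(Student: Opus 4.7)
The plan is to interpret $\eta_B$ topologically via \refP{spacelevel} and then apply the Milnor--Moore theorem. First I would unpack what \refP{spacelevel} says in functional form: together with the Adams--Hilton/Eilenberg--Moore isomorphism $H_B^{n}(X) \cong H^{n-1}(\Omega X)$ and the loop adjunction $\pi_n(X) \cong \pi_{n-1}(\Omega X)$, it factors the pairing $\langle\,,\,\rangle_\eta$ as the Kronecker pairing on $\Omega X$ precomposed with the Hurewicz map. Schematically, $\eta_B$ is the composition
$$H_B^{n}(X;\Q) \;\xrightarrow{\cong}\; H^{n-1}(\Omega X;\Q)
\;\xrightarrow{\cong}\; \mathrm{Hom}(H_{n-1}(\Omega X;\Q),\Q)
\;\xrightarrow{h^{*}}\; \mathrm{Hom}(\pi_{n-1}(\Omega X)\otimes\Q,\Q),$$
where the middle isomorphism is universal coefficients over a field and $h$ is the rational Hurewicz map.

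Next I would invoke the Milnor--Moore theorem cited in the setup: since $X$ is simply connected, the rational Hurewicz map $\pi_*(\Omega X)\otimes\Q \hookrightarrow H_*(\Omega X;\Q)$ is injective. Dualizing an injection of $\Q$-vector spaces gives a surjection, so $h^{*}$ in the composition above is surjective. Both other maps in the factorization are isomorphisms, hence $\eta_B\colon H_B^{n}(X;\Q) \twoheadrightarrow \mathrm{Hom}(\pi_{n-1}(\Omega X)\otimes\Q,\Q) = \mathrm{Hom}(\pi_n(X),\Q)$, which is the first claim.

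For the integral statement, I would observe that the bar construction commutes with the coefficient extension $\Z \to \Q$, so $H_B^{*}(X)\otimes\Q \cong H_B^{*}(X;\Q)$, and that $\mathrm{Hom}(\pi_{n-1}(X),\Z)\otimes\Q$ sits inside $\mathrm{Hom}(\pi_{n-1}(X),\Q)$ as the dual of the torsion-free quotient. Rational surjectivity onto $\mathrm{Hom}(\pi_{n-1}(X)\otimes\Q,\Q)$ therefore implies that the image of the integral $\eta_B$ has maximal rank in $\mathrm{Hom}(\pi_{n-1}(X),\Z)$.

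The one point that requires a moment of care, rather than being routine, is the compatibility asserted in the first step: that the concrete weight-reduction construction of $\int_{B(S^n)}$ in \refD{int_B} genuinely represents evaluation of the Eilenberg--Moore cohomology class on the fundamental class of $\Omega S^{n}$ under the Adams--Hilton equivalence. This is essentially the content of \refP{spacelevel}, which we are allowed to assume; beyond citing it, the rest of the argument is formal, so the only real obstacle is being clear about the chain of identifications in that first step.
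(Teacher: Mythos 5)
Your proof is correct and follows exactly the paper's route: the paper's entire argument is the one-line observation that Proposition~\ref{P:spacelevel} identifies $\eta_B$ with the dual of the rational Hurewicz map for $\Omega X$, whose injectivity is the Milnor--Moore theorem, so dualizing over $\Q$ gives surjectivity and the integral statement follows by comparing with the rationalization. Your write-up just makes explicit the chain of identifications (Adams--Hilton/Eilenberg--Moore, universal coefficients, loop adjunction) that the paper leaves implicit.
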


The question of the
kernel of this map, which is large, is addressed in the next sections.  
That such a large kernel arises is
explained from the operadic viewpoint as the fact that we are taking the wrong 
bar construction.  The rational PL cochains on a simplicial set are commutative,
so we should be taking a bar construction over the Koszul dual cooperad, namely the 
Lie cooperad, rather than associative cooperad.  That is, the correct homology theory
for a commutative 
algebra is Andr\'e-Quillen homology.

The question of the cokernel of the integral map is 
a natural generalization of one of the most famous questions in the
history of topology, namely the
Hopf invariant one problem.
The cokernel is trivial immediately for $X$ an odd sphere and is $\Z/2$ for $X$ an even
sphere other than $S^{2}$, $S^{4}$ and $S^{8}$ by Adams' celebrated result.
We will indicate a new line of attack on this problem after we develop
the Lie coalgebraic approach.

\subsection{Evaluation on Whitehead products}

Some properties of Hopf invariants, such as naturality, are immediate.  
A deeper result is to use the standard coproduct of $B(X)$ to compute 
the value of our generalized Hopf invariants on Whitehead products.

Let $f:S^n \to X$ and $g:S^m \to X$, and recall the definition of the Whitehead product: 
$$[f,g]:S^{n+m-1} \xrightarrow{\ W\ } S^n \vee S^m \xrightarrow{\ f \vee g\ } X.$$
We have $(f\vee g)^* = f^* + g^*$ on $C^{*}(X)$, so on $B(X)$ the map $(f\vee g)^*$ 
acts by $(f^* + g^*)$ on each component of a bar expression.  For example, 
\begin{align*}
 (f\vee g)^* \bigl(\s x_1\;|\; \s x_2\bigr) &= \s (f^* + g^*)x_1 | \s (f^* + g^*) x_2  \\
   &= \s f^* x_1 | \s f^* x_2\,  +\,  \s f^* x_1 | \s g^* x_2 
     \, +\, \s g^* x_1 | \s f^* x_2\, +\, \s g^* x_1 | \s g^* x_2
\end{align*}
We will generally mean for $f^*$ and $g^*$ to be considered as 
maps $B(X) \to B(S^n \vee S^m)$ as in this example, omitting the inclusions of
and projections onto wedge factors.

\begin{theorem}\label{T:cobracket}
 Let $\gamma$ be a cocycle in $B^{n+m-2}(X)$ and $f:S^n\to X$, $g:S^m\to X$.  Then 
  $$\left\langle \gamma,\ [f, g] \right\rangle_\eta = 
       \sum_j \langle \alpha_j,\; f\rangle_\eta \cdot \langle \beta_j,\; g\rangle_\eta  
          \ \mp \ \langle \alpha_j,\ g\rangle_\eta \cdot \langle \beta_j,\ f\rangle_\eta$$
  where $\Delta \gamma\ \simeq \sum_j \alpha_j \otimes \beta_j$, 
  with all $\alpha_j$ and $\beta_j$ closed, and $\mp$ is minus the Koszul sign induced
  by moving $s^\inv \alpha_j$ past $s^\inv \beta_j$.
\end{theorem}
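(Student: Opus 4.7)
The plan is to interpret both sides geometrically via \refP{spacelevel} and then invoke the classical Whitehead--Samelson correspondence together with the dg coalgebra structure on the bar complex.

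First, I would recall that $B(X)$ carries a standard dg coalgebra structure via deconcatenation,
\[
\Delta(x_1 | \cdots | x_k) \;=\; \sum_{i=0}^{k} (x_1|\cdots|x_i) \otimes (x_{i+1}|\cdots|x_k),
\]
and that under the Eilenberg--Moore identification $H^*_B(X) \cong H^*(\Omega X)$ this coproduct is dual to the Pontryagin product on $H_*(\Omega X)$.  Consequently, writing $\Delta\gamma \simeq \sum_j \alpha_j \otimes \beta_j$ with all summands closed, and taking any $A, B \in H_*(\Omega X)$, the coproduct--product duality gives
\[
\langle [\gamma],\; A \cdot B\rangle \;=\; \sum_j \langle [\alpha_j],\, A\rangle \cdot \langle [\beta_j],\, B\rangle.
\]

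Second, I would invoke the standard Whitehead--Samelson correspondence:  looping sends the Whitehead product on $\pi_*(X)$ to the Samelson product on $\pi_{*-1}(\Omega X)$, whose Hurewicz image in $H_*(\Omega X)$ is the graded Pontryagin commutator.  Setting $A = (\Omega f)_*[\Omega S^n]$ and $B = (\Omega g)_*[\Omega S^m]$ (so $|A|=n-1$, $|B|=m-1$), this yields
\[
(\Omega[f,g])_*\,[\Omega S^{n+m-1}] \;=\; A \cdot B \;\mp\; B \cdot A \;\in\; H_{n+m-2}(\Omega X),
\]
where $\mp$ is the appropriate Koszul sign.  Assembling:  by \refP{spacelevel},
\[
\langle \gamma, [f,g]\rangle_\eta \;=\; \langle [\gamma],\, (\Omega[f,g])_*[\Omega S^{n+m-1}]\rangle \;=\; \langle [\gamma], A\!\cdot\! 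B\rangle \;\mp\; \langle [\gamma], B\!\cdot\! A\rangle,
\]
and expanding each summand via the coproduct--product duality and applying \refP{spacelevel} a second time on each pairing to rewrite $\langle [\alpha_j], A\rangle = \langle \alpha_j, f\rangle_\eta$ (and similarly for $B$) produces exactly the claimed identity.

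The hard part will be sign bookkeeping.  The Koszul sign appearing in the Pontryagin commutator is $(-1)^{|A||B|}$ with $|A| = n-1$, $|B| = m-1$, which must be reconciled with the theorem's description as ``minus the Koszul sign induced by moving $s^{-1}\alpha_j$ past $s^{-1}\beta_j$''.  This is done by noting that only pairs $(\alpha_j, \beta_j)$ whose total bar degrees are compatible with $|A|$ and $|B|$ contribute nontrivially to the pairings, so that the per-pair sign is fixed, and then tracing through the paper's suspension/total-degree conventions.  As an independent sanity check, one could reduce by naturality to the universal case $X = S^n \vee S^m$, $f = i_1$, $g = i_2$, $[f,g] = W$; weight-reduce the pulled-back cocycle to a combination of the two basis cocycles $s u | s v$ and $s v | s u$ spanning $H^{n+m-2}_B(S^n \vee S^m)$; and then apply Example~\ref{EX:hopf on brackets} to each basis element directly to verify both the structure and the signs.
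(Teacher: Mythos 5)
Your proposal is correct in outline, but it is a genuinely different proof from the one in the paper. You work entirely on the loopspace side: you upgrade \refP{spacelevel} to a statement of coalgebras (deconcatenation on $B(X)$ dual to the Pontryagin product on $H_*(\Omega X)$ under Adams--Hilton/Eilenberg--Moore), and then import the Whitehead--Samelson correspondence together with Samelson's theorem that the Hurewicz image of a Samelson product is the graded Pontryagin commutator. All of these inputs are classical and true (integrally, so your argument does match the theorem's coefficient ring), but none of them is established in the paper --- \refP{spacelevel} is only stated for evaluation of classes, not as a coalgebra identification --- so your route trades self-containment for brevity. The paper instead stays at the cochain level: it rewrites $\langle\gamma,[f,g]\rangle_\eta$ as $\langle(f\vee g)^*\gamma, W\rangle_\eta$, introduces a component-bigrading on $B(S^n\vee S^m)$ by counting maximal $S^n$- and $S^m$-subwords, kills everything outside bigrading $(1,1)$ by K\"unneth and weight reduction, identifies the two $(1,1)$ subcomplexes with $B(S^n)\otimes B(S^m)$ --- which is exactly where the coproduct terms $\sum_j f^*\alpha_j\,|\,g^*\beta_j$ appear --- and finishes with the explicit Stokes'-theorem evaluation of Example~\ref{EX:hopf on brackets}. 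That version is elementary, produces explicit Hopf cochains, and pins down the sign via the orientation of $\partial(D^n\times D^m)$ rather than via Samelson-product conventions. Notably, the authors remark in Section~\ref{S:compare} that they originally outlined a loopspace-machinery proof of exactly your kind and later replaced it with the elementary one. Your acknowledged sign debt is real but dischargeable, and your proposed sanity check --- reducing to the universal case $X=S^n\vee S^m$ and invoking Example~\ref{EX:hopf on brackets} --- is precisely the mechanism the paper's proof runs on, so the two arguments meet there.
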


Our convention is for
$\langle \gamma,\ f\rangle_\eta = 0$ if $f:S^n \to X$ and $|\gamma| \neq n-1$.
The relation $\Delta\gamma \simeq \sum_j \alpha_j\otimes \beta_j$ with $\alpha_i$
and $\beta_i$ closed 
follows from the fact that $\Delta \gamma$ is closed in $B(X)\otimes B(X)$ 
and application of the K\"unneth isomorphism. 

In the next section, we will see that Hopf invariants completely determine the homotopy
Lie algebra of $X$ rationally.  The present theorem recovers some of the integral
information as well.

\begin{proof}

Because  $\langle\gamma,\ [f,g]\rangle_\eta = \langle (f\vee g)^*\gamma,\ W\rangle_\eta$
we may compute the Hopf invariant in $S^{n+m-1}$ by doing intermediate work in
$S^n\vee S^m$.
Recall $\bar C^{*}(S^n\vee S^m) \cong \bar C^*(S^n)\oplus\bar C^*(S^m)$,
which induces a component-bigrading on 
$B (S^n\vee S^m)$ as follows. 
If $\mu$ is a monomial in  $B(S^n\vee S^m)$, we say that $w \in B(S^{n})$ 
is an $S^n$ component of $\mu$ if it is a maximal length subword
with support only in $S^n$; we define $S^m$ components of $\omega$ similarly.  
Bigrade $B(S^n\vee S^m)$ by the 
number of $S^n$ components and the number of $S^m$ components.   
If $a,b\in C^{*}(S^n\vee S^m)$ have disjoint supports then $a\cup b = 0$, so
the differential of $B(S^n\vee S^m)$ preserves component-bigrading.

For example, let $a_i, b_i \in C^*(S^n\vee S^m)$ where the $a_i$ have support
in $S^n$ and the $b_i$  in $S^m$.  Then 
$\mu =  \s  \s b_1 | \s a_2 | \s a_3 | \s b_2 | \s b_3 | \s b_4$ has 
$S^n$ component  $\s a_2|\s a_3$  and $S^m$ components
$\s b_1$ and $\s b_2 | \s b_3 | \s b_4$, so $\mu$ has component bigrading
$(1,2)$.

Given a cocycle $\gamma \in B^{n+m-2}(X)$, the pullback 
$(f\vee g)^*\gamma \in B(S^n\vee S^m)$ splits as a sum
of cocyles in each component-bigrading. By the K\"unneth theorem,
$B(S^n\vee S^m)$ has no homology in degree $n+m-2$ away from bigrading 
$(1,1)$  and possibly $(1,0)$ and $(0,1)$.  So all terms of $(f\vee g)^*\gamma$ 
not in these bigradings are exact.  The terms in bigrading $(1,0)$ and $(0,1)$
are $f^*\gamma$ and $g^*\gamma$ which are exact after being pulled back 
 to $S^{n+m-1}$
by $W$.  Thus it suffices to focus on just the terms of $(f\vee g)^* \gamma$
in bigrading $(1,1)$.

Bigrading $(1,1)$ splits as the sum of two subcomplexes -- one where 
the $S^n$ component comes first, and the other where the $S^m$ component 
comes first.  The subcomplex of bigrading $(1,1)$ with the $S^n$ component 
first is isomorphic to $B(S^n) \otimes B(S^m)$. 
The terms of $(f\vee g)^*\gamma$ in this subcomplex are 
$$\sum_{\Delta\gamma = \sum_i a_i \otimes b_i} f^*(a_i) | g^*(b_i).$$ 
If $\sum_i a_i \otimes b_i \simeq \sum_j \alpha_j \otimes \beta_j$ then the 
cobounding expression in $B(X)\otimes B(X)$ will determine a homotopy between
the above terms and
$\sum_j f^*(\alpha_j) | g^*(\beta_j).$
By the K\"unneth theorem this is cohomologous to 
$\sum_j \tau(f^*\alpha_j) | \tau(g^*\beta_j).$
By our work in example \ref{EX:hopf on brackets} the Hopf pairing of this is
$\sum_j \langle\alpha_j,\ f\rangle_\eta \cdot \langle\beta_j,\ g\rangle_\eta$
as desired.

The terms of $(f\vee g)^*\gamma$ in bigrading $(1,1)$ with the $S^m$ component
first are similar, but with a shift of sign at the final step due to the change
in orientation induced by the isomorphism $D^n \times D^m \cong D^m \times D^n$.
\end{proof}

This theorem implies for example that the Hopf invariant of a cocycle of the 
form $\alpha|\beta \mp \beta|\alpha$ evaluates trivially on all Whitehead products.
Indeed, we saw in \refE{arbwt1} that this Hopf invariant is zero.
There are many further classes of Hopf invariants which vanish, though proving
this through direct methods quickly becomes difficult.  However \refT{cobracket}
suggests a key for identifying bar expressions with vanishing Hopf invariants.
If we define in the obvious way an ``anti-commutative coproduct'' on the bar construction,
the submodule where this coproduct vanishes will by \refT{cobracket} have Hopf
invariants which evaluate trivially on Whitehead products.  
Quotienting by this submodule yields the Lie coalgebraic cobar construction, as 
discussed in Section~3 of \cite{SiWa06}. This is our focus for the rest of the paper.

\section{Hopf invariants from the Lie coalgebra model of a space, and their completeness}

We now switch to the rational, commutative 
setting and we replace
the bar construction $B$ with the Lie coalgebraic bar construction $\E$ from \cite{SiWa06}.

\subsection{Lie coalgebraic bar construction}

\begin{definition}\label{D:eiln}
Let $V$ be a vector space. 
Define $\cofreeE(V)$ to be the quotient of 
$\mathbb{G}(V)$ by ${\rm Arn}(V)$, where $\mathbb{G}(V)$
is the span of the set of oriented acyclic graphs with vertices
labeled by elements of $V$ modulo multilinearity in the vertices,
and where ${\rm Arn}(V)$ is the subspace generated by
 arrow-reversing and Arnold expressions:
\begin{align*}
\text{(arrow-reversing)}\qquad & \qquad
\begin{xy}                           
  (0,-2)*+UR{\scriptstyle a}="a",    
  (3,3)*+UR{\scriptstyle b}="b",     
  "a";"b"**\dir{-}?>*\dir{>},         
  (1.5,-5),{\ar@{. }@(l,l)(1.5,6)},
  ?!{"a";"a"+/va(210)/}="a1",
  ?!{"a";"a"+/va(240)/}="a2",
  ?!{"a";"a"+/va(270)/}="a3",
  "a";"a1"**\dir{-},  "a";"a2"**\dir{-},  "a";"a3"**\dir{-},
  (1.5,6),{\ar@{. }@(r,r)(1.5,-5)},
  ?!{"b";"b"+/va(90)/}="b1",
  ?!{"b";"b"+/va(30)/}="b2",
  ?!{"b";"b"+/va(60)/}="b3",
  "b";"b1"**\dir{-},  "b";"b2"**\dir{-},  "b";"b3"**\dir{-},
\end{xy}\ +\ 
\begin{xy}                           
  (0,-2)*+UR{\scriptstyle a}="a",    
  (3,3)*+UR{\scriptstyle b}="b",     
  "a";"b"**\dir{-}?<*\dir{<},         
  (1.5,-5),{\ar@{. }@(l,l)(1.5,6)},
  ?!{"a";"a"+/va(210)/}="a1",
  ?!{"a";"a"+/va(240)/}="a2",
  ?!{"a";"a"+/va(270)/}="a3",
  "a";"a1"**\dir{-},  "a";"a2"**\dir{-},  "a";"a3"**\dir{-},
  (1.5,6),{\ar@{. }@(r,r)(1.5,-5)},
  ?!{"b";"b"+/va(90)/}="b1",
  ?!{"b";"b"+/va(30)/}="b2",
  ?!{"b";"b"+/va(60)/}="b3",
  "b";"b1"**\dir{-},  "b";"b2"**\dir{-},  "b";"b3"**\dir{-},
\end{xy} \\
\text{(Arnold)}\qquad & \qquad
\begin{xy}                           
  (0,-2)*+UR{\scriptstyle a}="a",    
  (3,3)*+UR{\scriptstyle b}="b",   
  (6,-2)*+UR{\scriptstyle c}="c",   
  "a";"b"**\dir{-}?>*\dir{>},         
  "b";"c"**\dir{-}?>*\dir{>},         
  (3,-5),{\ar@{. }@(l,l)(3,6)},
  ?!{"a";"a"+/va(210)/}="a1",
  ?!{"a";"a"+/va(240)/}="a2",
  ?!{"a";"a"+/va(270)/}="a3",
  ?!{"b";"b"+/va(120)/}="b1",
  "a";"a1"**\dir{-},  "a";"a2"**\dir{-},  "a";"a3"**\dir{-},
  "b";"b1"**\dir{-}, "b";(3,6)**\dir{-},
  (3,-5),{\ar@{. }@(r,r)(3,6)},
  ?!{"c";"c"+/va(-90)/}="c1",
  ?!{"c";"c"+/va(-60)/}="c2",
  ?!{"c";"c"+/va(-30)/}="c3",
  ?!{"b";"b"+/va(60)/}="b3",
  "c";"c1"**\dir{-},  "c";"c2"**\dir{-},  "c";"c3"**\dir{-},
  "b";"b3"**\dir{-}, 
\end{xy}\ + \                             
\begin{xy}                           
  (0,-2)*+UR{\scriptstyle a}="a",    
  (3,3)*+UR{\scriptstyle b}="b",   
  (6,-2)*+UR{\scriptstyle c}="c",    
  "b";"c"**\dir{-}?>*\dir{>},         
  "c";"a"**\dir{-}?>*\dir{>},          
  (3,-5),{\ar@{. }@(l,l)(3,6)},
  ?!{"a";"a"+/va(210)/}="a1",
  ?!{"a";"a"+/va(240)/}="a2",
  ?!{"a";"a"+/va(270)/}="a3",
  ?!{"b";"b"+/va(120)/}="b1",
  "a";"a1"**\dir{-},  "a";"a2"**\dir{-},  "a";"a3"**\dir{-},
  "b";"b1"**\dir{-}, "b";(3,6)**\dir{-},
  (3,-5),{\ar@{. }@(r,r)(3,6)},
  ?!{"c";"c"+/va(-90)/}="c1",
  ?!{"c";"c"+/va(-60)/}="c2",
  ?!{"c";"c"+/va(-30)/}="c3",
  ?!{"b";"b"+/va(60)/}="b3",
  "c";"c1"**\dir{-},  "c";"c2"**\dir{-},  "c";"c3"**\dir{-},
  "b";"b3"**\dir{-}, 
\end{xy}\ + \                              
\begin{xy}                           
  (0,-2)*+UR{\scriptstyle a}="a",    
  (3,3)*+UR{\scriptstyle b}="b",   
  (6,-2)*+UR{\scriptstyle c}="c",    
  "a";"b"**\dir{-}?>*\dir{>},         
  "c";"a"**\dir{-}?>*\dir{>},          
  (3,-5),{\ar@{. }@(l,l)(3,6)},
  ?!{"a";"a"+/va(210)/}="a1",
  ?!{"a";"a"+/va(240)/}="a2",
  ?!{"a";"a"+/va(270)/}="a3",
  ?!{"b";"b"+/va(120)/}="b1",
  "a";"a1"**\dir{-},  "a";"a2"**\dir{-},  "a";"a3"**\dir{-},
  "b";"b1"**\dir{-}, "b";(3,6)**\dir{-},
  (3,-5),{\ar@{. }@(r,r)(3,6)},
  ?!{"c";"c"+/va(-90)/}="c1",
  ?!{"c";"c"+/va(-60)/}="c2",
  ?!{"c";"c"+/va(-30)/}="c3",
  ?!{"b";"b"+/va(60)/}="b3",
  "c";"c1"**\dir{-},  "c";"c2"**\dir{-},  "c";"c3"**\dir{-},
  "b";"b3"**\dir{-},
\end{xy}
\end{align*}
Here ${a}$, ${b}$, and ${c}$ are elements of $V$ labeling vertices of a graph
which could possibly have edges connecting to
other parts of the graph (indicated by the ends of edges abutting 
$a$, $b$, and $c$),
which are not modified in these operations. 

A Lie cobracket is
defined as $$] G [\ = \sum_{e \in G} (G^{\hat{e}}_1 \otimes G^{\hat{e}}_2 - 
G^{\hat{e}}_2 \otimes G^{\hat{e}}_1),$$ 
where $e$ ranges
over the edges of $G$, and $G^{\hat{e}}_1$ and $G^{\hat{e}}_2$ 
are the connected components
of the graph obtained by removing $e$, which points 
to $G^{\hat{e}}_2$.   In \cite{SiWa06} we show this is well defined.  For example,
\begin{align*}
\left]\graphmp{a}{b}{c}
\right[\ =&   
\left( \linep{b}{c}\otimes 
       \bullet^{a}\right) -
\left( \bullet^{a}\otimes 
       \linep{b}{c}\right) \\
  &+  \left( \linep{b}{a}\otimes 
       \bullet^{c}\right)  -  
\left( \bullet^{c} \otimes 
       \linep{b}{a}\right).
\end{align*}
\end{definition}

We require a graded version of $\cofreeE(V)$ which
we define in Section~3 of \cite{SiWa06}.   Our graph-theoretic representation
is an explicit model for Lie coalgebras, as first studied in \cite{Mich80}.

\begin{proposition}
If $V$ is graded in positive degrees, then
$\cofreeE(V)$ is isomorphic to the cofree Lie coalgebra on $V$.
\end{proposition}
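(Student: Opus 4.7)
The plan is to show that $\cofreeE(V)$, equipped with the cobracket $][$, satisfies the universal property of the cofree Lie coalgebra on $V$ in the category of graded Lie coalgebras, and in particular is abstractly isomorphic to the standard construction $\bigoplus_n (\Ln^* \otimes V^{\otimes n})^{S_n}$. This proceeds in three steps: (i) verify that the cobracket descends to the quotient $\cofreeE(V) = \mathbb{G}(V)/\mathrm{Arn}(V)$; (ii) verify that the resulting cobracket satisfies co-antisymmetry and the co-Jacobi identity; (iii) identify $\cofreeE(V)$ with $\cofreeL^c(V)$ as Lie coalgebras.

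For step (i), I would check edge by edge that applying $][$ to an arrow-reversing or Arnold expression lands in $\mathrm{Arn}(V) \otimes \mathbb{G}(V) + \mathbb{G}(V) \otimes \mathrm{Arn}(V)$. Because the cobracket operates purely by cutting edges and because arrow-reversing and Arnold are local relations on three or fewer vertices, this is a routine bookkeeping argument: cutting an edge not involved in the relation leaves the relation intact on one side of the tensor, while cutting an edge internal to the relation produces matching pairs after careful sign tracking. Co-antisymmetry is then immediate from the antisymmetrized form of $][$. The co-Jacobi identity is where the Arnold relation earns its name: the three terms in co-Jacobi arise from cutting two edges $e$ and $e'$ in different orders, and when $e$ and $e'$ share a vertex or form a directed path on three vertices $a,b,c$, the three terms are precisely those in the Arnold relation; summing over all pairs $(e,e')$ and invoking Arnold yields co-Jacobi.

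For step (iii), the main result of \cite{SiWa06} establishes that the arity-$n$ piece $\Dn$ (graphs on $n$ vertices modulo Arnold and arrow-reversing, up to relabeling) is isomorphic as an $S_n$-representation to $\Ln^*$, with a compatible cooperad structure induced by splitting graphs at edges that matches the one dual to substitution in the Lie operad. Plugging $V$ into this identification and symmetrizing gives a graded vector space isomorphism $\cofreeE(V) \cong \cofreeL^c(V)$; the fact that it intertwines the two cobrackets follows because on $\cofreeE(V)$ the cobracket is defined by edge-splitting, which is exactly the cooperadic decomposition from \cite{SiWa06} evaluated on $V^{\otimes n}$. The standard construction $\cofreeL^c(V)$ is cofree in the graded Lie coalgebra category, so $\cofreeE(V)$ is as well.

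The main obstacle is the combinatorial identification $\Dn \cong \Ln^*$ together with the compatibility of cooperad structures, but this is precisely the content of \cite{SiWa06} and is invoked rather than reproved. The remaining subtleties are sign conventions in the graded setting (the graded version referenced from Section~3 of \cite{SiWa06} assigns each vertex a degree shift, and orientations of edges contribute Koszul signs when vertices are permuted) and the distinction between the direct-sum cofree coalgebra and its completion; the latter is harmless here because $V$ is concentrated in positive degrees, so the weight-$n$ piece of $\cofreeE(V)$ already has internal degree at least $n$ and only finitely many graphs contribute to each homogeneous component.
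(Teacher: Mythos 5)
Your proposal is correct and follows essentially the same route as the paper, which offers no detailed proof but simply invokes the configuration pairing of \cite{SiWa06} to identify $\cofreeE(V)$ as the perfect-pairing dual of the free Lie algebra on $W = V^*$ — the same result underlying your identification $\Dn \cong \Ln^*$. The extra verifications you supply (descent of the cobracket to the Arnold quotient, co-Jacobi, the conilpotent/completion caveat) are exactly the routine steps the paper delegates to Section~3 of \cite{SiWa06}.
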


Indeed, if $V$ and $W$ are linearly dual, then the {\em configuration pairing} of 
\cite{Sinh06.2, SiWa06, Walt10} can be used to define a perfect pairing between $\cofreeE(V)$ and the
free Lie algebra on $W$.

\begin{definition}\label{D:E}
Let $A$ be a one-connected commutative differential graded algebra 
with differential $d_{A}$ and  multiplication 
$\mu_{A}$.  Define $\mathcal{G}(A)$ to be  the total complex of the bicomplex
$
 \bigl(\mathbb{G}(s^{\inv}\bar A),\ 
  d_{A},\ d_\mu\bigr).$
Here $s^{\inv}\bar A$ is the desuspension of the ideal of positive-degree elements of $A$,
the ``internal'' differential $d_A$ is given by cofreely extending that of $A$ by the Leibniz rule, 
and the ``external'' differential
$d_\mu (g) = 
  \sum_{e} \mu_e (g)$ where up to sign $\mu_{e} (g)$ contracts the edge $e$ in $g$,
  multiplying the elements of $A$ labeling its endpoints to obtain the new label. 
  (For the sign convention see Definition~4.5 of \cite{SiWa06}.)
Let $\E(A)$ be the quotient $\mathcal{G}(A)\diagup\text{Arn}(s^{\inv}\bar A)$.  
  
We define internal degree and weight of monomials in $\E(A)$ as the sum of the
degrees of component elements and the number of vertices in the graph monomial.
As before, total degree is internal degree minus the weight.

Define $\E(X)$ to be $\E(A^*(X))$ where $A^*(X)$ is  a
model for rational commutative cochains 
on $X$, and let $H_{\E}^{*}(X)$ denote $H^{*}(\E(X))$.
\end{definition}

The complex  $\E(A)$ is the ``Lie coalgebraic'' bar 
construction on a commutative algebra which computes Harrison homology, 
constructed presently as a quotient of ``graph coalgebraic''
bar construction $\mathcal{G}(A)$.

Harrison homology is a special case of
Andr\'e-Quillen homology \cite{Andr74, Quil70}
for one-connected commutative differential graded algebras.   
Recall that the Harrison chains $B_{H}(A)$ 
are constructed dually to Quillen's functor $\mathcal{L}$.  The bar construction of a commutative
algebra is a Hopf algebra under the shuffle product and the splitting coproduct.  The 
Harrison complex is given by quotienting to Hopf algebra indecomposables.  By a celebrated
theorem of Barr \cite{Barr68}, the quotient map $p:BA \to B_{H}A$ has a splitting $e$ 
when working over a field of characteristic 0.

\begin{proposition}\label{P:harriso}
There is a short-exact sequence of bicomplexes, giving an isomorphism of final terms
$$
\begin{CD}
0 @>>> {\rm{Sh}}(A) @>>> B(A) @>>> B_H(A) @>>> 0 \\
&&  @VVV @V{\phi}VV @V{\cong}VV \\
0 @>>> {\rm Arn}(A) @>>> \mathcal{G}(A) @>>> \E(A) @>>> 0,
\end{CD}
$$
where $\phi$ sends the bar expression $a_1 | a_2 | \cdots | a_n $ to 
the graph  \    
$\begin{xy}
 (-6,-2)*+UR{\scriptstyle a_1}="1",
 (0,2)*+UR{\scriptstyle a_2}="2",
 (6,-2)*+UR{\cdots}="3",
 (12,2)*+UR{\scriptstyle a_n}="4",
 "1";"2"**\dir{-}?>*\dir{>},
 "2";"3"**\dir{-}?>*\dir{>},
 "3";"4"**\dir{-}?>*\dir{>}
\end{xy}$.
Furthermore this is an isomorphism of Lie coalgebras 
when the Harrison complex is given the Lie coalgebra structure defined
by Schlessinger and Stasheff \cite{ScSt85}.  
\end{proposition}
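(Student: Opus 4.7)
The plan is to produce the diagram by defining the middle vertical map $\phi$ explicitly, verifying the commutativity and the chain-map properties, and then deducing the isomorphism of right-hand terms from the structure of cofree Lie coalgebras on both sides. Finally I will match the two Lie coalgebra structures on the common quotient.

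First I would check that $\phi$, defined on generators by sending $a_1 | a_2 | \cdots | a_n$ to the linear graph $a_1 \to a_2 \to \cdots \to a_n$, is a map of bicomplexes. Compatibility with the internal differential $d_A$ is immediate since both are given by Leibniz extension on vertex/letter labels. For the external differential, $d_\mu$ on a bar expression removes a bar and multiplies the two adjacent letters, while $d_\mu$ on a linear graph contracts an edge and multiplies the labels of its endpoints; the only edges of a linear graph are between consecutive vertices, so the two match term-by-term, including signs once one is careful about the suspension convention used in Definition~4.5 of \cite{SiWa06}.

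Next I would show $\phi$ carries $\mathrm{Sh}(A)$ into $\mathrm{Arn}(s^{\inv}\bar A)$. The shuffle subcomplex is generated by non-trivial shuffle products of two bar monomials; under $\phi$ such a shuffle becomes a sum of linear graphs on the concatenated vertex set corresponding to all interleavings. A direct expansion shows that for each pair of adjacent monomials this sum is precisely the Arnold-type expression obtained by fixing the partial order coming from the two factors and summing over total linear extensions, which is known (for example, by induction on the number of vertices using the three-term Arnold identity) to lie in $\mathrm{Arn}$. Thus $\phi$ induces a map of short exact sequences, yielding a well-defined map $\bar\phi: B_H(A) \to \E(A)$.

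To see $\bar\phi$ is an isomorphism I would invoke the cofree Lie coalgebra characterization already in place. The preceding proposition asserts $\cofreeE(V)$ is the cofree Lie coalgebra on $V$ when $V$ is in positive degrees, and the Schlessinger-Stasheff construction identifies $B_H(A)$, as a graded vector space, with the cofree Lie coalgebra on $s^{\inv}\bar A$. Since $\bar\phi$ is natural in $A$ and is the identity on the weight-one generators (single letters going to single vertices), the universal property forces it to be an isomorphism of graded vector spaces, hence of bicomplexes once the chain-map property above is taken into account. This is the step I expect to be the main obstacle: while the universal-property argument is clean, making it rigorous amounts to checking that Proposition~3.21 of \cite{SiWa06} really identifies both quotients with the same cofree Lie coalgebra by a map agreeing with $\bar\phi$ on cogenerators, so one must trace through the configuration pairing identifications carefully.

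Finally I would verify that the Lie coalgebra structures agree. On $\E(A)$ the cobracket $]\cdot[$ sums over edge cuts of a linear graph; applied to the image of $a_1 | \cdots | a_n$, this produces $\sum_{i} (a_1 | \cdots | a_i) \otimes (a_{i+1} | \cdots | a_n) - (a_{i+1} | \cdots | a_n) \otimes (a_1 | \cdots | a_i)$, which is precisely the antisymmetrized deconcatenation coproduct used by Schlessinger-Stasheff to define the cobracket on $B_H(A)$. Since the two cobrackets agree on linear-graph representatives, which span the quotient, $\bar\phi$ is an isomorphism of Lie coalgebras, completing the proposition.
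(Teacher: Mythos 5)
Your plan is sound in outline, but note that the paper does not prove this proposition at all: it is imported wholesale from Proposition~3.21 of \cite{SiWa06}, so what you have written is a reconstruction of that external argument rather than a parallel to anything in this paper. Your reconstruction correctly isolates the two points where all the content lives, but both are asserted rather than established. First, the containment $\phi({\rm Sh}(A))\subseteq{\rm Arn}(s^{\inv}\bar A)$ is the real combinatorial heart of the statement; your appeal to ``a direct expansion'' and an unnamed induction is exactly the claim to be proved. (It is true --- e.g.\ the shuffle of $a|b$ with $c$ maps to $(a{\to}b{\to}c)+(a{\to}c{\to}b)+(c{\to}a{\to}b)$, which vanishes after one Arnold relation and two arrow reversals --- but in \cite{SiWa06} this is obtained more cleanly by exhibiting ${\rm Sh}$ and ${\rm Arn}$ as the respective kernels of the pairing with the free Lie algebra $\freeL(W)$, the latter via the configuration pairing, and checking that $\phi$ intertwines the two pairings; that route also gives injectivity of $\bar\phi$ for free.) Second, your universal-property step quietly assumes that $B_H(A)=B(A)/{\rm Sh}(A)$ is itself the cofree Lie coalgebra on $s^{\inv}\bar A$ --- a characteristic-zero fact (dual Ree/PBW, or Barr's splitting) that is logically prior to, not a consequence of, the Schlessinger--Stasheff structure you cite --- and it uses the coalgebra-map property of $\bar\phi$ that you only verify afterwards in your final step, so the order of steps three and four must be reversed. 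With those two points supplied (or with the configuration-pairing argument substituted for them), your proof goes through and is a legitimate, more self-contained alternative to the paper's bare citation; surjectivity of $\bar\phi$ should also be flagged explicitly, since it rests on the separate lemma of \cite{SiWa06} that every acyclic graph reduces modulo ${\rm Arn}$ to a combination of long graphs.
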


Our presentation of Harrison homology via graphs has a critical advantage over the 
classical construction.  While the 
set of generators of  $\mathcal{G}(A)$ is larger than that of $B(A)$, 
the set of relations $\text{Arn}(A)$ are simpler to express 
and are defined locally, unlike the shuffle relations $\text{Sh}(A)$.  
This greatly simplifies some proofs, constructions, and calculations.
By abuse we will sometimes use bar notation to refer to elements 
of $\E(A)$, suppressing $\phi$. 

The significance of Harrison homology in this setting is that it bridges the worlds
of cohomology and homotopy.   Hopf invariants give a  geometric understanding of this bridge.  
For convenience, we use Barr's splitting and our development of Hopf
invariants for the standard bar complex to quickly establish the basic properties.
Deeper properties and explicit constructions require the Lie coalgebraic formulation.  

\subsection{Hopf invariants}

As we did for the bar complex,  we start with the fact that 
$H^{n-1}_{\E}(S^n)$ is rank one, generated by an element of weight one.

\begin{definition}
Given a cocycle $\gamma \in \E^{n-1}S^n$ we let $\tau(\gamma) \simeq \gamma$ be any 
cohomologous cocyle of weight one.

Write $\int_{\E(S^n)}$ for the map from cocycles in $\E^{n-1}(S^n)$ to $\mathbb{Q}$ 
given by $\int_{\E(S^n)} \gamma = \int_{S^n} \tau(\gamma)$.
\end{definition}

\begin{lemma}\label{L:Ezero}
The map $\int_{\E(S^n)}$ is well defined and induces the isomorphism $H^{n-1}_{\E}(S^{n}) \cong \Q$.
\end{lemma}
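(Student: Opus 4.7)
The plan is to deduce the lemma from Lemma~\ref{L:key} via Barr's splitting, which over $\mathbb{Q}$ provides a chain map $e:\E(A)\to B(A)$ splitting the quotient $p:B(A)\to \E(A)$ of \refP{harriso}, so that $p\circ e = \mathrm{id}_{\E(A)}$ and $\E(A)$ becomes a chain retract of $B(A)$.

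First I would establish that $H^{n-1}_\E(S^n)$ has rank at most one.  Since $p$ admits a chain-map right inverse $e$, the induced map $p_\ast : H^{n-1}_B(S^n)\to H^{n-1}_\E(S^n)$ is surjective, so by Lemma~\ref{L:key} the target is at most one-dimensional, with any generator cohomologous to a weight-one cocycle.  Weight-one elements coincide in $B(A)$ and $\E(A)$, and both $p$ and the splitting $e$ act as the identity there.  Consequently, if $\omega$ represents the generator of $H^n(S^n)$ and the class $[s^\inv\omega]\in H^{n-1}_\E(S^n)$ were to vanish, writing $s^\inv\omega = d_\E\beta$ and applying $e$ would yield $s^\inv\omega = d_B e(\beta)$, a weight-one boundary in $B(S^n)$, contradicting Lemma~\ref{L:key}.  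Hence $H^{n-1}_\E(S^n)\cong\mathbb{Q}$, generated by this weight-one class.

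Next I would verify well-definedness of $\int_{\E(S^n)}$.  Existence of a weight-one representative $\tau(\gamma)$ for any cocycle $\gamma\in\E^{n-1}(S^n)$ follows by lifting through $e$: apply Lemma~\ref{L:key} to $e(\gamma)\in B^{n-1}(S^n)$ to find a cohomologous weight-one cocycle $s^\inv\omega'$, and then project back, $\gamma = p(e(\gamma))\simeq_\E s^\inv\omega'$ in $\E(S^n)$.  Independence of the choice transfers in the same way: the difference $s^\inv(\omega_1-\omega_2)$ of two weight-one representatives is $d_\E$-exact, so under $e$ it becomes $d_B$-exact, forcing $\int_{S^n}\omega_1 = \int_{S^n}\omega_2$ by the analogous statement \refD{int_B} for $B(S^n)$.

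The only delicate point is justifying that $e$ fixes weight-one elements, which I would handle by invoking the standard Eulerian (Dynkin) idempotent construction of Barr's splitting: these idempotents preserve the weight filtration and equal the identity in weight one.  Should one prefer to bypass Barr, one could instead redo the weight-reduction argument of Lemma~\ref{L:key} directly in $\E(S^n)$, using a K\"unneth-type calculation together with the vanishing (after the appropriate sign twist) of the $\Sigma_k$-coinvariants of the Lie cooperad for $k\geq 2$ to kill the top-weight piece of any cycle.
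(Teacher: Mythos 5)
Your proposal is correct and takes essentially the same route as the paper: the paper's proof simply observes that $\int_{\E(S^n)} = \int_{B(S^n)}\circ e$ because Barr's splitting $e$ is the identity on weight-one elements, and then cites Lemma~\ref{L:key}. You have merely spelled out in detail the surjectivity/injectivity transfer along $p$ and $e$ that the paper leaves implicit, including the correct justification (the first Eulerian idempotent fixes weight one) that the paper only records later in its discussion of the graph complex.
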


\begin{proof}
We observe that $\int_{\E(S^{n})} = \int_{B(S^{n})} \circ e$, since Barr's splitting
$e$ is an isomorphism
which is an ``identity'' on the weight one parts of these complexes.  Thus this lemma follows from 
Lemma~\ref{L:key}.
\end{proof}

Following \cite{SiWa06}, it would be better to use
the graph coalgebraic bar construction $\mathcal{G}(A)$ here, but we chose to use the classical
bar construction because it is more familiar, has an established Barr splitting, and is directly tied to cohomology of loopspaces.

We can now take our Definition~\ref{D:hopfform} of Hopf invariants
and replace the bar complex everywhere by
$\E$ (and the integers by the rational numbers).  When necessary, we distinguish 
the Hopf pairings and Hopf invariants by decorations such as $\langle \;,\; \rangle_{\eta}^{\E}$
vs. $\langle \;,\;\rangle_{\eta}^{B}$,
or $\eta^{\E}$ vs. $\eta^{B}$, etc.  

\begin{theorem}\label{T:splitcompat}
Hopf invariants are compatible with the quotient map $p$ and  Barr's splitting $e$
between $B(X)$ and $\E(X)$.  That is, $\eta^{B} = \eta^{\E} \circ p$ and
$\eta^{\E} = \eta^{B} \circ e$.
\end{theorem}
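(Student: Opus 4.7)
The plan is to reduce each identity to two facts: that $p\colon B(X)\to\E(X)$ and Barr's splitting $e\colon\E(X)\to B(X)$ both restrict to the identity on the weight-one subspace (which in either complex is $s^{\inv}\bar A^*(X)$), and that both maps are natural in the underlying commutative DGA. Unwinding Definition~\ref{D:hopfform} and its obvious $\E$-analogue, and using the naturality equalities $p f^* = f^* p$ and $e f^* = f^* e$, the two assertions become
\[
\int_{B(S^n)} f^*\omega \;=\; \int_{\E(S^n)} p(f^*\omega),
\qquad
\int_{\E(S^n)} f^*\gamma \;=\; \int_{B(S^n)} e(f^*\gamma),
\]
for $\omega\in B^{n-1}(X)$ a cocycle representing a class in $H^*_B(X)$ and $\gamma\in\E^{n-1}(X)$ a cocycle representing a class in $H^*_\E(X)$.

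The second of these equalities is precisely the formula $\int_{\E(S^n)} = \int_{B(S^n)}\circ e$ used in the proof of Lemma~\ref{L:Ezero}: if $\tau(f^*\gamma)\in\E(S^n)$ is a weight-one cocycle cohomologous to $f^*\gamma$, then $e(\tau(f^*\gamma))$ is literally the same element regarded as a weight-one bar expression in $B(S^n)$, and it is cohomologous to $e(f^*\gamma)$ since $e$ is a chain map; so it is a Hopf cochain for $e(f^*\gamma)$ and yields the same integer on the fundamental class of $S^n$. The first equality is the mirror statement: given a weight-one representative $\tau(f^*\omega)\in B(S^n)$ of $f^*\omega$, the element $p(\tau(f^*\omega))$ is again of weight one (and equal to $\tau(f^*\omega)$ under the identification of weight-one parts) and cohomologous to $p(f^*\omega)$ in $\E(S^n)$, hence a valid Hopf cochain for $p(f^*\omega)$ yielding the same value on $[S^n]$.

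The one potentially delicate point is confirming the naturality of Barr's splitting $e$ with respect to $f^*$. This is a standard feature of Barr's construction, which in characteristic zero is obtained from the Eulerian idempotents of the shuffle--deconcatenation Hopf algebra structure on $B(A)$ for commutative $A$ and is built entirely from universal operations in the commutative DGA variable; naturality of $p$ is automatic from its definition as the quotient by the shuffle subcomplex. Once these naturalities are in place, no further computation is required beyond the weight-one identification, which is invoked twice.
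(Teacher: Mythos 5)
Your proposal is correct and follows essentially the same route as the paper: the paper's proof is a diagram chase in which two squares commute by naturality of $p$ and $e$ and two triangles commute because $\int_{\E(S^n)} = \int_{B(S^n)}\circ e$ (and its mirror), both reducing to the fact that $p$ and $e$ are the identity on weight-one elements. Your explicit verification that a weight-one representative $\tau(\cdot)$ is carried to a valid weight-one representative on the other side just spells out what the paper leaves to the reader.
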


\begin{proof}
We consider the adjoint maps and fix an $f \in \pi_{n}(X)$. The images
of $(\eta^{B})^{\dagger}(f)$ and $(\eta^{\E})^{\dagger}(f)$ are then given by the composites
going from left to right in the following diagram. 
$$
\xymatrix@R=7pt@C=50pt{
 B(X) \ar[r]^{f^*} \ar@<-2pt>[dd]_{p} & B(S^n) \ar@<-2pt>[dd]_{p} \ar[dr]^{\int_{B(S^n)}} & \\
 & & \mathbb{Q} \\ 
 \E(X)  \ar[r]^{f^*} \ar@<-2pt>[uu]_{e} & \E(S^n) \ar@<-2pt>[uu]_{e} \ar[ur]_{\int_{\E(S^n)}} & 
}
$$
The result follows  from commutativity of the two squares (one involving $p$ and
one involving $e$) and two
triangles in the above diagram. 
The squares commute due to naturality of the quotient map and Barr's splitting, and 
the triangles commute by
our comment in the proof of Lemma~\ref{L:Ezero}.
\end{proof}

We  now see that all of the ``shuffles'' in the kernel of the quotient 
map $p : B(X) \to \E(X)$ give rise to the zero homotopy functional, constituting 
a large kernel.

Using this theorem, we can translate some theorems from the bar setting, such as 
the following which is immediate
from \refT{cobracket}.

\begin{corollary}\label{C:cobracket}
 Let $\gamma$ be a cocycle in $\E^{n+m-2}(X)$ and $f:S^n\to X$, $g:S^m\to X$. 
 Then   

 $$\left\langle \gamma,\ [f, g]\right\rangle^\E_\eta = 
	   \sum_j \langle \alpha_j,\; f\rangle^\E_\eta \cdot \langle \beta_j,\; 
	   g\rangle^\E_\eta$$
  where $]\gamma[\ \simeq \sum_j \alpha_j \otimes \beta_j$, 
  with all $\alpha_j$ and $\beta_j$ closed.  
\end{corollary}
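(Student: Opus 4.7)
The plan is to reduce to \refT{cobracket} via Barr's splitting $e : \E(X) \to B(X)$ and then identify the resulting expression with the Lie cobracket on $\E(X)$. Setting $\tilde\gamma := e(\gamma) \in B^{n+m-2}(X)$, which is a cocycle lifting $\gamma$, I would apply \refT{splitcompat} to the full pairing, then \refT{cobracket} to $\tilde\gamma$, and then \refT{splitcompat} once more to each factor of the answer, to obtain
\begin{align*}
\langle\gamma, [f,g]\rangle^\E_\eta
&= \langle\tilde\gamma, [f,g]\rangle^B_\eta \\
&= \sum_j \langle p(\tilde\alpha_j), f\rangle^\E_\eta\cdot \langle p(\tilde\beta_j), g\rangle^\E_\eta \,\mp\, \langle p(\tilde\alpha_j), g\rangle^\E_\eta\cdot \langle p(\tilde\beta_j), f\rangle^\E_\eta,
\end{align*}
where $\Delta\tilde\gamma \simeq \sum_j \tilde\alpha_j \otimes \tilde\beta_j$ is a K\"unneth decomposition by closed elements and $\mp$ denotes minus the Koszul sign of the twist.

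Next I would consolidate the two summands into one. Since the pairing values are scalars and the sign $\mp$ is exactly the sign of the graded twist $\tau$ on $B(X)\otimes B(X)$, the displayed expression can be rewritten as $\sum_k \langle p(\tilde\alpha_k'), f\rangle^\E_\eta \cdot \langle p(\tilde\beta_k'), g\rangle^\E_\eta$, where the pairs $(\tilde\alpha_k', \tilde\beta_k')$ decompose the antisymmetrized coproduct $(\Delta - \tau\Delta)(\tilde\gamma) \simeq \sum_k \tilde\alpha_k' \otimes \tilde\beta_k'$ by closed representatives. The final input is the Schlessinger--Stasheff description of the Lie cobracket on the Harrison complex $\E(A) = B_H(A)$ (referenced in \refP{harriso} and \cite{ScSt85}): this cobracket is precisely the map induced by $(p\otimes p)\circ(\Delta - \tau\Delta)$. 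Since $p\circ e = \mathrm{id}$, applying this to $\tilde\gamma$ gives $]\gamma[$, and the pairs $(p(\tilde\alpha_k'), p(\tilde\beta_k'))$ form a closed-representative decomposition of $]\gamma[$, producing the claimed formula.

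The main obstacle is the last step---knowing that $(p\otimes p)\circ(\Delta - \tau\Delta)$ really descends to the Lie cobracket on $\E$. This is dual to the familiar fact that the commutator of primitives in a Hopf algebra is primitive: $B(A)$ with its shuffle product and deconcatenation coproduct forms a commutative Hopf algebra, and the passage to Hopf indecomposables $B_H(A) = \E(A)$ inherits a Lie cobracket from the antisymmetrized coproduct. The sign bookkeeping---matching the Koszul sign in \refT{cobracket} to the graded antisymmetry of the cobracket---is technical but routine under the conventions of \cite{SiWa06}.
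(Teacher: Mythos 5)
Your proposal is correct and follows exactly the route the paper intends: the paper offers no written proof, stating only that the corollary follows from \refT{cobracket} by translating via \refT{splitcompat}, which is precisely the $e$--then--$p$ argument you carry out. Your identification of the antisymmetrized coproduct $(p\otimes p)\circ(\Delta-\tau\Delta)$ with the cobracket $]\cdot[$ via the Schlessinger--Stasheff structure of \refP{harriso} is the right way to fill in the one step the paper leaves implicit.
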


By recasting our Hopf invariants  in terms of Lie coalgebras, we get some 
immediate applications.

\begin{example}\label{E:wedge1}
Consider a wedge of spheres $X = \bigvee_{i} S^{d_{i}}$.  Let $\omega_{i}$ represent
a generator of $H^{d_{i}}(S^{d_{i}})$ and $W$ be the differential graded vector space
spanned by the $\omega_{i}$ with trivial products and differential.
Then $W$ is a model for cochains on $X$  and $\E(W) = {\mathbb{E}}(W)$ is just the cofree Lie coalgebra
on $W$.

\refT{cobracket} along with Proposition~3.6 of \cite{SiWa06} imply that the value of some Hopf
invariant $\eta_{\gamma}$ on an iterated Whitehead product $P$ of the basic inclusions
$\iota_{i} = [S^{d_{i}} \hookrightarrow X] \in \pi_{d_{i}}(X)$ is given by the configuration pairing (defined in Corollary~3.5 of \cite{SiWa06}) of $\gamma$ and $P$.  
This immediately implies that the $\iota_{i}$ generate a copy of a free Lie algebra in the 
rational homotopy groups of $X$.  

Because this example is universal, a corollary to \refT{cobracket} is that in general the value of
a Hopf invariant on an iterated Whitehead product can be calculated through a configuration pairing. 
\end{example}

\subsection{Completeness}

Hopf invariants associated to $\E(X)$ give a complete and sharp picture of rational homotopy
groups.

\begin{theorem}\label{T:complete}
If $X$ is simply connected, then $\eta^\E : H^{*-1}_\E(X) \to {\rm Hom}( \pi_{*}(X),\; \Q)$ is an isomorphism.
\end{theorem}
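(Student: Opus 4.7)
The plan is to derive surjectivity directly from the results already assembled, and to obtain injectivity by a dimension count that identifies both sides with a common computation from a cofibrant model. I will first treat simply connected $X$ of finite rational type; the general simply connected case can then be recovered by writing $X$ as a filtered colimit of such subspaces and using the compatibility of $\E$ and $\pi_{*}\otimes\Q$ with such colimits.

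Surjectivity is essentially free. By \refP{hopfinvar} the rational bar invariant $\eta^{B}\colon H^{*}_{B}(X;\Q)\to \mathrm{Hom}(\pi_{*-1}(X),\Q)$ is surjective, and by \refT{splitcompat} we have $\eta^{B}=\eta^{\E}\circ p$, where $p\colon B(X)\to\E(X)$ is the natural quotient. Hence $\eta^{\E}$ is surjective as well; equivalently, Barr's splitting $e$ transports any bar-realized homotopy invariant back into $\E(X)$.

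For injectivity I would invoke the classical Andr\'e--Quillen / Harrison identification: for a simply connected commutative dg algebra model of $X$ of finite rational type, $H^{*}_{\E}(X)$ is naturally isomorphic as a graded vector space to the appropriately shifted dual of $\pi_{*}(X)\otimes\Q$. Combined with surjectivity, the equality of dimensions in each degree forces $\eta^{\E}$ to be an isomorphism. To make this more self-contained and geometric, I would first verify the theorem directly on a wedge of spheres via Example~\ref{E:wedge1}: there $\E(X)$ is the cofree Lie coalgebra on the sphere cochains, $\pi_{*}(X)\otimes\Q$ is the free Lie algebra on the fundamental classes, and the configuration pairing of \cite{SiWa06}, via \refC{cobracket}, realizes $\eta^{\E}$ as the perfect duality between these two objects. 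I would then propagate the isomorphism to a general simply connected $X$ by working up a cellular (or minimal Quillen) model built by iterated attachment of cells along Whitehead products, using naturality of $\eta^{\E}$ and a five-lemma argument on the long exact sequences attached to each cell.

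The main obstacle is exactly this inductive step: one must show that cell attachments, whose attaching classes are iterated Whitehead products, interact correctly with both sides of $\eta^{\E}$. Here \refC{cobracket} supplies precisely the needed compatibility, since it identifies the Whitehead bracket on $\pi_{*}$ with the Lie cobracket on $\E(X)$ under the Hopf pairing, so the inductive step reduces to the duality between these two structures. Making the cell-by-cell argument completely rigorous amounts to reconstructing Quillen's rational homotopy equivalence through the graph-coalgebraic bar construction $\E$, which is itself the conceptual payoff of the theorem; accepting Quillen's theorem as a black box, the proof collapses to the surjectivity above combined with the rank count in each degree.
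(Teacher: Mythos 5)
Your surjectivity step is fine: \refP{hopfinvar} together with the factorization $\eta^{B}=\eta^{\E}\circ p$ from \refT{splitcompat} does give surjectivity of $\eta^{\E}$, and the paper itself flags this as one available starting point. But your route to injectivity is genuinely different from the one the paper takes --- indeed it is precisely one of the ``possible lines of proof'' the paper lists and then deliberately declines to follow. You close the argument by quoting the Andr\'e--Quillen/Harrison identification of $H^{*}_{\E}(X)$ with the shifted dual of $\pi_{*}(X)\otimes\Q$, i.e.\ by taking Quillen's equivalence (or the equivalences of the prequel paper) as a black box and counting dimensions; your fallback, the cellular induction from wedges of spheres via \refC{cobracket}, you yourself concede amounts to reconstructing that equivalence. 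The paper instead manufactures an independent proof: it defines relative Hopf invariants and a long exact sequence for $H^{*}_{\E}$ of a pair, shows that this sequence pairs compatibly with the homotopy long exact sequence, proves that $p^{*}:\E(B)\to\E(E,F)$ is a quasi-isomorphism for a fibration using the Sullivan model and the contraction Lemmas~\ref{L:BTV=V} and~\ref{L:next}, and then inducts up the Postnikov tower, the base case of Eilenberg--MacLane spaces being handled by \refL{BTV=V} plus the Hurewicz theorem. What your approach buys is brevity at the cost of importing the main theorem of rational homotopy theory; what the paper's buys is an independent, geometric derivation whose only topological inputs are the existence of Postnikov towers and the twisted-tensor-product model of a fibration. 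If you do pursue your version, two points need attention: the dimension count requires finite rational type in each degree, and your filtered-colimit reduction is delicate because both $H^{*}_{\E}(-)$ and ${\rm Hom}(\pi_{*}(-),\Q)$ convert colimits of spaces into limits, so the passage to general $X$ still has to be argued rather than asserted.
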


There are three possible lines of proof.  Using \refT{cobracket} and Example~\ref{E:wedge1}
we could show that $\eta$ agrees with 
the isomorphism given by the Quillen equivalences established in \cite{SiWa06}. 
Alternately we could try to start with the surjectivity of this map, which follows
from \refP{hopfinvar}, and try to calculate the kernel.
Instead we shall
prove more directly that $\eta$ is an isomorphism by developing
a long exact sequence of a fibration for $\E$.
This approach yields an independent proof and gives a more direct geometric understanding
of the isomorphism in \refT{complete}.

\begin{definition}
Let $\E(E, F)$ be the kernel of the map from $\E(E)$ to $\E(F)$ induced by inclusion.
Define relative Hopf invariants for $\pi_{n}(E, F)$, by pulling back a cycle
in $\E(E, F)$ to $\E(D^{n},S^{n-1})$ and then reducing to weight one and evaluating.
\end{definition}

That $H^{n-1}_{\E}(D^{n}, S^{n-1})$ is rank one generated by a cycle of weight one 
follows from a direct calculation.
To see that relative Hopf invariants are well defined 
we may use the same weight-reduction argument.

By construction, there is a long exact sequence for $H^{*}_{\E}(F)$, $H^{*}_{\E}(E)$ 
and $H^{*}_{\E}(E, F)$.  Hopf invariants give a pairing between between
this long exact sequence and the long exact sequence for relative homotopy groups.
For clarity, we will drop the identifier $\eta$ from our Hopf pairings below 
and instead subscript them by their ambient space.

\begin{proposition}
The following long exact sequences pair compatibly
$$\xymatrix@R=5pt{
\cdots \ar[r] & \pi_{n}(F) \ar[r]^{i_{*}} & \pi_{n}(E) \ar[r]^{j_{*}} & \pi_{n}(E, F) 
		\ar[r]^{\partial} & \pi_{n-1}(F) \ar[r] & \cdots \\
 \cdots &  H^{n-1}_{\E}(F) \ar[l] & H^{n-1}_{\E}(E) \ar[l]_{i^{*}} & H^{n-1}_{\E}(E, F) \ar[l]_{j^{*}} & H^{n-2}_{\E}(F) 
		\ar[l]_{\delta} & \cdots \ar[l]
}$$
That is, for $f \in \pi_{n}(F)$ and $\gamma \in H^{n-1}_{\E}(E)$, $\langle i^{*} \gamma, f \rangle_{F} =
\langle \gamma, i_{*} f \rangle_{E}$, and similarly elsewhere.
\end{proposition}

\begin{proof}

For $i_{*}$ and $i^{*}$ this is immediate from naturality, 
as both $\langle i^{*} \gamma,\, f \rangle_{F} $ and
$\langle \gamma,\, i_{*} f \rangle_{E}$ are equal to 
$\int_{S^{n}} \tau\left((f \circ i)^*\gamma\right)$. The 
argument for $\langle j^{*} \gamma,\, f \rangle_{E} $  and 
$\langle \gamma,\, j_{*} f \rangle_{(E, F)}$ 
is the same
once one substitutes $\pi_{n}(E, pt.)$ for $\pi_{n}(E)$ and $H^{n}_{\E}(E, pt.)$ for 
$H^{n}_{\E}(E)$.  In this case one now sees both pairings as given by 
$\int_{D^{n}} \tau\left((f \circ j)^*\gamma\right)$.

Finally we consider the connecting homomorphisms. 
Recall that $\delta : H^{n-2}_{\E}(F) \to H^{n-1}_{\E}(E, F)$ 
takes a cocycle $\gamma\in \E^{n-2}(F)$, lifts it at the cochain level to $\E^{n-2}(E)$ and then takes
its coboundary.  Thus given $f:(D^n, S^{n-1}) \to (E,F)$, we have
$\langle \delta(\gamma),\, f\rangle_{(E,F)} = \int_{D^{n}} \tau(f^*d_{\E} \bar{\gamma})$, 
where $\bar{\gamma}$ is a lift of $\gamma$ to $E$.  
Using the same lift $\bar{\gamma}$ we see that, 
$$\langle \gamma,\, \partial f \rangle_{F} =  \int_{S^{n-1}} \tau\bigl( (\partial f)^* \gamma\bigr) 
= \int_{\partial D^{n}} \tau(f^*\bar{\gamma}) = \int_{D^n} d\, \tau(f^*\bar{\gamma}).$$
\end{proof}

We now connect with the exact sequences of a fibration.
Let $p: E \to B$ be a fibration and write $\lambda : \pi_{n}(B) \to \pi_{n}(E, F)$ for the standard
map defined by lifting.

\begin{proposition}\label{P:E_fibration}
The map $p^{*} : \E(B) \to \E(E, F)$ is a weak equivalence. 
\end{proposition}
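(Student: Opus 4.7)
My plan is to reduce to a relative Sullivan model for the fibration and then run a spectral sequence. First, using the quasi-isomorphism preservation of $\E$ established in \cite{SiWa06}, I would replace the map $A^*(B) \to A^*(E)$ by a relative Sullivan model $A \hookrightarrow A \otimes \Lambda V$, where $A = A^*(B)$ and the quotient $(\Lambda V, \bar d)$ models $A^*(F)$. With this model in place, $A^*(E) \to A^*(F)$ is the surjection that augments the $A$-factor, so $\E(E) \to \E(F)$ is surjective already at the chain level and $\E(E,F)$ is genuinely its kernel. Using the decomposition $\overline{A \otimes \Lambda V} = \bar A \oplus (A \otimes \overline{\Lambda V})$, the pullback $p^*$ is precisely the subcomplex inclusion of graphs all of whose vertex labels lie in $\bar A$.

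Next, I would filter $\E(E,F)$ increasingly by the \emph{fiber weight} $w_F$ of a graph, defined as the number of vertices whose label has nonzero $(A \otimes \overline{\Lambda V})$-component. A short case check shows that $\{w_F \leq p\}$ is a subcomplex for $d + d_\mu$: the internal differential can only convert a fiber vertex into a base one (via the $A$-summand of $dv$), and $d_\mu$ preserves $w_F$ when the contracted edge has at least one base endpoint and decreases $w_F$ by one when contracting a fiber-fiber edge. The filtration is bounded in each total degree by the simple-connectivity hypothesis (vertex count is bounded by total degree), so the associated spectral sequence converges. Its bottom level $F_0$ is by construction the image of $p^*$ and is canonically isomorphic to $\E(B)$, so it suffices to show $\mathrm{gr}_p \E(E,F)$ is acyclic for $p \geq 1$.

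The main obstacle is this last acyclicity statement. On $\mathrm{gr}_p$ the $d_0$ differential retains only the fiber-preserving pieces: edge contractions with at least one base endpoint, together with the cofreely extended action of $d_A \otimes 1 + 1 \otimes \bar d$ on fiber labels. I would construct a contracting chain homotopy on each $\mathrm{gr}_p$ by distinguishing a fiber vertex in each graph (using a canonical ordering coming from the graph structure) and applying the augmentation $\Lambda V \to \mathbb{Q}$ at that vertex, in direct analogy with the Poincar\'e-lemma cobounding already used to produce explicit Hopf forms (compare \refE{mfld}). The delicate check is compatibility with the Arnold and arrow-reversing relations defining $\E = \mathcal{G}/\mathrm{Arn}$: one has to work on canonical lifts in $\mathcal{G}$, build the homotopy there, and verify that it descends modulo $\mathrm{Arn}$. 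Granting this acyclicity, the $E_1$ page is concentrated in filtration degree $0$ and is equal to $H^*_\E(B)$, so $p^*$ is the claimed quasi-isomorphism.
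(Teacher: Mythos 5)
Your overall architecture is reasonable and starts the same way the paper does: both arguments first replace $\apl E$ by the relative Sullivan model $\apl B \otimes \Lambda V_F$ and reduce to a statement about $\E$ of that twisted tensor product. Your bookkeeping for the fiber-weight filtration (which pieces of $d_{A}$ and $d_\mu$ preserve or lower $w_F$, boundedness from one-connectivity, identification of $F_0$ with the image of $p^*$) is also fine. The gap is that the entire content of the proposition has been pushed into the acyclicity of $\mathrm{gr}_p$ for $p\ge 1$, and the homotopy you propose there does not exist as described. The augmentation $\Lambda V_F \to \Q$ is a degree-zero algebra map that annihilates $\overline{\Lambda V_F}$; ``applying it at a distinguished fiber vertex'' is not a degree $-1$ operator and cannot cobound anything. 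Nor is there a Poincar\'e-lemma style $\dinv$ to invoke: in the examples where $\dinv$ appears in this paper one already knows the ambient complex is acyclic in the relevant degree, whereas here that acyclicity is exactly the claim to be proved. There is a second, unaddressed wrinkle: $\mathrm{gr}_p$ of the kernel $\E(E,F)$ is not ``graphs with $p$ fiber vertices''; it is the kernel of $\mathrm{gr}_p\,\E(\apl B\otimes\Lambda V_F)\to \mathrm{gr}_p\,\E(\Lambda V_F)$, and the target generally has nontrivial cohomology for $p\ge 2$ (already for $F=S^2$ the weight-two piece of $\E(\Lambda V_F)$ carries the class dual to $\pi_3$). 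So the acyclicity you need is equivalent to a genuine base-change/K\"unneth comparison on the associated graded, which your sketch never engages.

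For contrast, the paper does not filter by fiber weight at all. It proves two absolute statements --- $V_F\hookrightarrow \E(\Lambda V_F)$ and $\E\apl B\oplus V_F\hookrightarrow \E(\apl B\otimes\Lambda V_F)$ are quasi-isomorphisms (\refL{BTV=V} and \refL{next}) --- by an explicit weight-raising chain homotopy $h$ that extracts letters from the words decorating vertices and satisfies $d_\E h + h d_\E = \mathrm{id} + (\text{higher weight})$; the locality of the Arnold and arrow-reversing relations is what makes $h$ well defined on $\E$. The relative statement then follows by comparing the two short exact sequences of kernels. If you want to salvage your spectral-sequence route, you would need to construct essentially this same letter-extraction homotopy on each $\mathrm{gr}_p$ (or prove the base-change quasi-isomorphism $\mathrm{gr}_p\,\E(\apl B\otimes\Lambda V_F)\xrightarrow{\ \simeq\ }\mathrm{gr}_p\,\E(\Lambda V_F)$ for $p\ge 1$ by some other means); the filtration by itself buys you nothing beyond what the paper's direct argument already provides.
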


\begin{corollary}
Let $\gamma$ be a cycle in $\E(E,F)$.  Then $\gamma$ is homologous to $p^*(\bar\gamma)$ for
some $\bar\gamma$ in $\E B$.
\end{corollary}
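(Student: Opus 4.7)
The plan is to derive this immediately from \refP{E_fibration}. A weak equivalence of (co)chain complexes is by definition a quasi-isomorphism, meaning the induced map $(p^*)_* : H^*_\E(B) \to H^*_\E(E,F)$ is an isomorphism, so in particular surjective. Surjectivity at the level of cohomology classes is precisely what the corollary asserts, reformulated at the cochain level.

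More explicitly, the step I would carry out is the following. Given a cocycle $\gamma \in \E(E,F)$, consider its cohomology class $[\gamma] \in H^*_\E(E,F)$. By \refP{E_fibration} there exists a class $[\bar\gamma] \in H^*_\E(B)$ with $(p^*)_*[\bar\gamma] = [\gamma]$. Choosing any cocycle representative $\bar\gamma \in \E(B)$, the equality of cohomology classes $[p^*\bar\gamma] = [\gamma]$ means precisely that $p^*\bar\gamma$ and $\gamma$ differ by a coboundary in $\E(E,F)$, i.e.\ they are homologous in the sense used in the statement. This is all that the corollary claims.

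There is really no main obstacle at this stage: the content of the corollary has been loaded into \refP{E_fibration}, and the translation from "weak equivalence" to "every cocycle is homologous to a pullback" is a standard bookkeeping step. The one mild point worth flagging is the grading conventions, since $\E$ is presented as a cochain complex: one must verify that "cycle in $\E(E,F)$" in the statement means cocycle in the sense compatible with the differential $d_A + d_\mu$ used in \refD{E}, so that the cohomology isomorphism of \refP{E_fibration} applies. Once this is noted, no further argument is required, and the corollary follows in a single line.
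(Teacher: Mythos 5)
Your argument is correct and is exactly how the paper intends the corollary to be read: it is stated immediately after Proposition~\ref{P:E_fibration} with no separate proof, precisely because surjectivity of $p^*$ on cohomology is the whole content. No gaps; the grading remark is a reasonable sanity check but nothing more is needed.
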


\begin{proof}[Proof of Proposition \ref{P:E_fibration}]
We use the maps induced by the Sullivan model for the fiber inclusion map $i$,
as given in \cite{FHT01} \S 15(a):
$$
\xymatrix{
  \apl F & \apl E  \ar[l]_(.57){i^*} &  \apl B \ar[l]_(.47){p^*} \ar[dl]^(.47){\iota} \\
 (\Lambda V_F, \bar d) \ar[u]^{m_F}_{\simeq} &
   (\apl B \otimes \Lambda V_F, d). \ar[l]_(.57){\varepsilon} \ar[u]^{ m}_{\simeq} &
    & } 
$$
Here $(\Lambda V_F, \bar d)$ is the Sullivan model for $\apl F$, $\iota$ is the map induced 
by the inclusion of the unit $\Q \to \Lambda V_F$, and
$\varepsilon$ is the map induced by the augmentation $\apl B \to \Q$.

Write $\hat \E(E,F)$ for the kernel of the map
$\varepsilon:\E(\apl B \otimes \Lambda V_F, d) \to \E(\Lambda V_F, \bar d)$.
There is an induced quasi-isomorphism $\hat \E(E,F) \xrightarrow{\simeq} \E(\apl E, \apl F)$.
Our work is completed in the lemmas which follow by establishing
that the inclusion maps $i_1$ and $i_2$ in the 
diagram below are both quasi-isomorphisms, and therefore so is the induced map on kernels 
$\E \apl B \to \hat \E(E, F)$. 

$$
\xymatrix@C=40pt{
 \E \apl F & \E\apl E \ar[l]_(.6){i^*} & \E(E,F) \ar[l]_(.35){\delta} \\
 \E(\Lambda V_F) \ar[u]^{\E m_F}_{\simeq} & 
   \E(\apl B \otimes \Lambda V_F) \ar[l]_(.6){\varepsilon_*} \ar[u]^{\E m}_{\simeq} &
   \hat \E(E, F) \ar[l]_(.35){\bar \delta} \ar[u]_{\simeq}  \\ 
 V_F \ar[u]^{i_1}_{\simeq} &
   \E \apl B \oplus V_F \ar[l]_(.6){pr} \ar[u]^{i_2}_{\simeq} & 
   \E \apl B \ar[l]_(.35){\hat \delta} \ar[u]_{\simeq} 
	  \ar@/_/[ul]^{\iota_*} \ar@/_/[uul]|(.46){\phantom{X^X_X}}_(.7){p^*} }
$$
where the differentials on $V_{F}$ and $\Lambda V_{F}$ are the standard  restrictions of that on 
$\apl B \otimes \Lambda V_{F}$.

\end{proof}

An alert reader would note that while we have generally steered clear of the 
traditional approach to rational homotopy through minimal models, we do use the Sullivan model
for a fibration here.  We only need that the cochains of a 
total space of a fibration is a twisted tensor product of those on the
fiber and base, and we consider Sullivan model theory to be the 
most convenient and well-known place to reference this fact.  
We could just as well have used earlier work of Brown \cite{Brow59} giving this result for simplicial
cochains of fibrations, along
with a translation to PL cochains and 
the fact that we can make free models (we did not require minimality).
Thus, our work is still independent from the theory of minimal models.

\begin{lemma} \label{L:BTV=V} 
The inclusion $V_F  \to \E(\Lambda V_F)$ sending $v$ to $\overtie{v}{\bullet}$
is a quasi-isomorphism.
\end{lemma}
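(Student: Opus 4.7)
The plan is to reduce the claim, via a double spectral sequence in word-length and weight, to the vanishing of Harrison homology of a free commutative algebra above weight one.

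Filter $\E(\Lambda V_F)$ by total $\Lambda V_F$-word-length of the vertex labels, $F^p = \{\text{word length} \geq p\}$. The external differential $d_\mu$ contracts an edge while multiplying labels, preserving total word length; the internal differential $d_A$, extending $\bar d$ by Leibniz, only preserves or increases it since $\bar d(V_F) \subseteq \Lambda^{\geq 1} V_F$. The map $\iota$ lands in $F^1$. Since $V_F$ lies in degrees $\geq 2$, total word length is bounded by total degree in each bidegree, so this decreasing filtration is locally finite and the associated spectral sequence converges. On the $E_0$ page only the word-length-preserving part of $d$ survives, namely $d_\mu$ together with the linear part $\bar d_1$ of $\bar d$, yielding $(E_0,d_0) = \E(\Lambda(V_F, \bar d_1))$.

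A secondary spectral sequence obtained by filtering by graph weight, combined with the K\"unneth theorem to commute $\Lambda$ with homology, reduces the computation of $E_1$ to the zero-differential assertion: for any graded vector space $W$ one has $H^*(\E(\Lambda W), d_\mu) = W$, concentrated in weight one. Weight one of $\E(\Lambda W)$ modulo the image of $d_\mu$ from weight two gives $\bar{\Lambda W}/\Lambda^{\geq 2} W = W$ directly. The vanishing in weights $\geq 2$ is the classical Harrison/Andr\'e-Quillen acyclicity of a free commutative algebra, which we may cite through the identification $\E(\Lambda W) \cong B_H(\Lambda W)$ provided by \refP{harriso}, or establish intrinsically by the weight-reduction technique of \refL{key}: a top-weight-$k$ part of a cocycle determines a cocycle in $(\bar{\Lambda W})^{\otimes k}/{\rm Arn}$, which by the K\"unneth formula and the Koszul pairing between free commutative algebras and cofree Lie coalgebras is exact, allowing weight to be reduced inductively until one reaches weight one.

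Combining these reductions, $E_1 = (V_F, \bar d_1)$ concentrated in filtration degree one, and $\iota$ induces an isomorphism on $E_1$; the comparison theorem for convergent spectral sequences finishes the proof. The main obstacle is the zero-differential base case, which rests on the classical Harrison vanishing and requires a genuine argument in the graph formalism where Arnold relations must be handled; the rest is routine bookkeeping of two spectral sequences whose convergence is guaranteed by positive-degree boundedness of $V_F$.
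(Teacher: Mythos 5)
Your main line of argument is correct but takes a genuinely different route from the paper. The paper uses no spectral sequences: it writes down an explicit degree $-1$ operator $h$ on $\E(\Lambda V_F)$ which, summing over the letters of each vertex word of length at least two, splits one letter off onto a new adjacent vertex, and checks directly that $d_{\E}h + hd_{\E} = \mathrm{id} + (\text{graphs of greater weight})$ away from $V_F$; since weight is bounded by total word length, iterating $h$ contracts every cycle outside $V_F$. This is a single self-contained contraction modeled on Theorem~19.1 of \cite{FHT01}, and the authors emphasize that it exists only because the Arnold and arrow-reversing relations are local --- the analogous operator is not available on the bar-complex presentation of the Harrison complex. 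Your double filtration (word length, then weight) reducing to the Harrison/Andr\'e--Quillen acyclicity of a free graded-commutative algebra is a legitimate alternative over $\Q$, via \refP{harriso} and Barr's splitting, and it isolates cleanly where the real content sits; what it gives up is self-containedness and the explicit homotopy, and it outsources the key step to a classical theorem whose standard proofs are themselves contractions of exactly the paper's type.

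There is, however, a genuine flaw in your proposed ``intrinsic'' justification of the base case. The weight-reduction technique of \refL{key} uses the \emph{internal} differential to cobound the top-weight part of a cocycle, invoking the K\"unneth theorem for that differential. On your $E_1$ page the internal differential is zero; the only differential remaining is $d_\mu$, which \emph{is} the Harrison differential whose acyclicity in weights $\geq 2$ you are trying to establish. The object $(\overline{\Lambda W})^{\otimes k}/\mathrm{Arn}$ carries no auxiliary differential for K\"unneth to act on, so the claimed exactness is circular. Rely on the citation route, or reproduce the paper's contraction $h$ in the special case $\bar d = 0$ --- which is essentially what the classical proof of Harrison acyclicity of $\Lambda W$ amounts to.
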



\begin{proof}
We proceed in a  similar manner as Theorem 19.1 of \cite{FHT01}.
The homogeneous elements of $\E\Lambda V_F$ are graphs whose vertices
are decorated by nonempty words in $V_F$, the sum of whose lengths we call
the total word length.
We define a map of complexes $h : \E\Lambda V_F \to \E\Lambda V_F$ which,
away from the image of $V_F$
gives a chain homotopy between the identity map and a 
map which increases the weight of a graph while keeping
its total word length fixed if possible, or which is zero if that is not possible.
Since weight is bounded by total word length, iterated applications of this will
eventually produce a null-homology of any cycle not in $V_F$. 

Write a generic long graph $\gamma \in \E^{m+1}\Lambda V$ of
total word length $n$ as
$$\gamma =  \left(
\begin{xy}
 (-10,-3)*+UR{\scriptstyle v_1 v_{2} \cdots}="1",
 (0,3)*+UR{\scriptstyle v_{k_1} v_{k_{1}+1} \cdots}="2",
 (10,-3)*+UR{\cdots}="3",
 (20,3)*+UR{\scriptstyle v_{k_j} \cdots}="4",
 (30,-3)*+UR{\cdots}="5",
 (40,3)*+UR{\scriptstyle v_{k_m} \cdots v_n}="6",
 "1";"2"**\dir{-}?>*\dir{>},
 "2";"3"**\dir{-}?>*\dir{>},
 "3";"4"**\dir{-}?>*\dir{>},
 "4";"5"**\dir{-}?>*\dir{>},
 "5";"6"**\dir{-}?>*\dir{>},
\end{xy}\right).$$
Define $h(\gamma)$ to be zero if the total word length of $\gamma$ is 
equal to $m + 1$ (in which case $\gamma = v_{1} | \cdots | v_{m+1}$).  Otherwise $\gamma$ 
has at least one word of length two decorating some vertex.  In this case  
$h(\gamma)$ is a sum, over the letters in such words, of 
the graphs obtained by 
removing each letter in turn and using it to decorate a new vertex 
attached to the old vertex
while fixing the rest of the graph.  
Explicitly,
$$h(\gamma) \ = \  
\displaystyle \frac{1}{n} \sum_{\substack{j \text{ with } \\ k_{j+1} - k_j > 1}}  
               \sum_{k_j \le p \le k_{j+1}-1} \ (-1)^{\kappa(p)} \ \  
%
 \Biggl(\begin{aligned}\begin{xy}
  (0,-3)*+UR{\cdots}="1",
  (18,3)*+UR{\scriptstyle v_{k_j} \cdots  \widehat{v}_p  \cdots   v_{k_{j+1}-1}}="2",  
  (36,-3)*+UR{\cdots}="3",
  (18,10)*+UR{\scriptstyle v_p}="4",
  "1";"2"**\dir{-}?>*\dir{>},
  "2";"3"**\dir{-}?>*\dir{>},
  "4";"2"**\dir{-}?>*\dir{>}
 \end{xy}\end{aligned}\Biggr) 
%
%
$$ 
where the $(-1)^{\kappa(p)}$ is the Koszul sign coming from moving a degree
one operator to $v_p$'s vertex and moving $v_p$ to the front of its word and 
across an $s^\inv$ and $n$ is the total word length.

More generally if $\gamma$ is any decorated graph, we define $h$ as a
sum over the letters in the
words of length at least two decorating the vertices of $\gamma$, removing each letter in
turn and using it to decorate
a new vertex as above. 
The internal ordering
of the  vertices in the resultant graph is the same as for $\gamma$, with the
new vertex occurring immediately before the vertex its letter was removed from.

That $h$ is well defined
follows easily from the local nature of the anti-symmetry and Arnold relations in the definition
of $\E$.  
Away from $V_F$, $h$ decreases degree by one (due to the unwritten $s^\inv$ in front of $v_p$) and increases graph length by one.

It is straightforward to check that $d_{\E} h + h d_{\E} = id + \text{(graphs of greater length)}$ 
outside of $V_F$.  
The Arnold and arrow reversing relations are required in order to establish 
equality for graphs with internal vertices decorated by singleton words.  For example, 
using notation from \cite{SiWa06} and writing $(-1)^a$ instead of
$(-1)^{|a|}$, we have  
$$\begin{aligned}
d_{\E}\, h \left(\biggraphpp{s^\inv a}{s^\inv b}{s^\inv c}\right) &= d_{\E}\, 0 = 0\\
h\, d_{\E} \left(\biggraphpp{s^\inv a}{s^\inv b}{s^\inv c}\right)
&= h \left( 
	(-1)^{a}
 \begin{xy} 
  (0,1)*+R{\scriptstyle s^\inv ab}="ab",
  (12,-1)*+R{\scriptstyle s^\inv c}="c",
  "ab";"c"**\dir{-}?>*\dir{>}, 
 \end{xy}
+ (-1)^{a+1+b}
 \begin{xy} 
  (0,-1)*+R{\scriptstyle s^\inv a}="a", 
  (12,1)*+R{\scriptstyle s^\inv bc}="bc", 
  "a";"bc"**\dir{-}?>*\dir{>}, 
 \end{xy}                   
 + d\!_A \bigl(\biggraphpp{s^\inv a}{s^\inv b}{s^\inv c}\bigr)
\right) \\
&= (-1)^a \, \frac{1}{3} \left( 
(-1)^{a}
\overtie{
 \begin{xy} 
	(0,4)*+UR{\scriptstyle 1}="a",
  (0,-2)*+UR{\scriptstyle 2}="b",
  (8,-3)*+UR{\scriptstyle 3}="c",
  "a";"b"**\dir{-}?>*\dir{>}, 
  "b";"c"**\dir{-}?>*\dir{>}, 
 \end{xy}
}{s^\inv a\otimes s^\inv b \otimes s^\inv c}
+ (-1)^{ab+b} 
\overtie{
 \begin{xy} 
	(0,4)*+UR{\scriptstyle 1}= "b",
  (0,-2)*+UR{\scriptstyle 2}="a",
  (8,-3)*+UR{\scriptstyle 3}="c",
  "b";"a"**\dir{-}?>*\dir{>}, 
  "a";"c"**\dir{-}?>*\dir{>}, 
 \end{xy}
}{s^\inv b\otimes s^\inv a\otimes s^\inv c}
\right)  \\
& \qquad  +
(-1)^{a+1+b} \, \frac{1}{3}\left( 
(-1)^{a+1+b}
\overtie{
 \begin{xy} 
  (0,-3)*+UR{\scriptstyle 1}="a",
  (8,-2)*+UR{\scriptstyle 3}="c",
	(8,4)*+UR{\scriptstyle 2}="b",
  "a";"c"**\dir{-}?>*\dir{>}, 
  "b";"c"**\dir{-}?>*\dir{>}, 
 \end{xy}
}{s^\inv a\otimes s^\inv b \otimes s^\inv c}
+ (-1)^{a+1+bc+c} 
\overtie{
 \begin{xy} 
  (0,-3)*+UR{\scriptstyle 1}="a",
  (8,-2)*+UR{\scriptstyle 3}="b",
	(8,4)*+UR{\scriptstyle 2}="c",
  "a";"b"**\dir{-}?>*\dir{>}, 
  "c";"b"**\dir{-}?>*\dir{>}, 
 \end{xy}
}{s^\inv a\otimes s^\inv c\otimes s^\inv b}
\right) + 0 \\
&= \frac{2}{3} \biggraphpp{s^\inv a}{s^\inv b}{s^\inv c} - 
\frac{1}{3}\left( 
(-1)^{(a+b)(c+1)} \biggraphpp{s^\inv c}{s^\inv a}{s^\inv b} 
+ (-1)^{(a+1)(b+c)} \biggraphpp{s^\inv b}{s^\inv c}{s^\inv a}\right)
 = \biggraphpp{s^\inv a}{s^\inv b}{s^\inv c}\end{aligned}$$
The general proof proceeds by ``adding 
whiskers'' to the input vertices of these graphs, so that this calculation becomes a local 
part of a larger graph.  The Arnold identity is used an additional time to move whiskers back
to their correct vertex.

In the end, on the basic case of graphs whose total word length is $m+1$ 
$h$ is a chain homotopy between the identity map and the projection onto $V_{F}$ .
\end{proof}

\begin{remark}
Our proof of Lemma~\ref{L:BTV=V} makes essential use of the graph complex representation
of Harrison homology.
In particular, the definition of $h$  above does not preserve long graphs, 
so it is not defined on the bar complex representation of Harrison homology.  
The standard proof of the bar complex analogue of Lemma~\ref{L:BTV=V} uses the homotopy
$h(v_1\otimes\cdots \otimes v_{k_1}|\cdots|\cdots \otimes v_n) = 
 (v_1|v_2\otimes \cdots \otimes v_{k_1}|\cdots|\cdots \otimes v_n)$ if $k_1\neq 1$ and 
$0$ otherwise.  This does not induce a well-defined map either from commutative algebras 
(using symmetric tensors) or to bar expressions modulo shuffles.
Thus for example \cite{FHT01} Proposition 22.8, which is approximately dual to Lemma~\ref{L:BTV=V},
must be proven by non-constructive methods, detouring through the universal enveloping algebra.
\end{remark}

\begin{lemma}\label{L:l2}
The inclusion $\E\apl B \oplus V_F \to \E(\apl B\otimes \Lambda V_F)$
is a quasi-isomorphism.
\end{lemma}

\begin{proof}[Sketch of proof]
The proof is similar to the previous lemma.
Homogeneous elements of $\E(\apl B \otimes \Lambda V_F)$ are graphs whose vertices
are decorated by an element of $\apl B$ and a word in $V_F$.  On any vertex, the decorating 
element from $\apl B$ may be trivial or the $V_F$ word may be empty, but not both at once.

As in the proof of Lemma~\ref{L:BTV=V}, we define a chain  homotopy $h$ which
evaluates on a graph as a sum over replacements of each vertex  
``pulling out'' $V_F$ letters in turn, connecting them to the vertex by a new edge.
As before it is straightforward to calculate that $d_{\E} h + h d_{\E} = id + \text{(graphs of greater length)}$
outside of $\E \apl B \oplus V_F$.
\end{proof}

To our knowledge, the dual of this result does not appear in the literature.
Attempting to directly prove the dual 
statement by methods analogous to the proof of Proposition 22.8 in \cite{FHT01} would be 
difficult, though 
it could be deduced through more abstract methods we require an explicit definition
of chain homotopies for computations, such as in Section~\ref{S:hom}.

\begin{proposition}
Evaluation of Hopf invariants is compatible with the isomorphisms  $\pi_{n}(E, F) \cong \pi_{n}(B)$
and $H^{n}_{\E}(E, F) \cong H^{n}_{\E}(B)$.  That is, $\langle p^{*} \gamma,\, f \rangle_{(E, F)}
= \langle \gamma,\, p_{*} f \rangle_{B}$ and 
$\langle \gamma,\, \lambda f \rangle_{(E, F)} = \langle \bar \gamma,\, f \rangle_{B} .$
\end{proposition}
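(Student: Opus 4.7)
The plan is to prove the two compatibility statements in sequence: the first by straightforward naturality combined with the compatibility of the pairings on $\E(D^n, S^{n-1})$ and $\E(S^n)$ through the quotient map $q: D^n \to S^n$, and the second by reducing to the first via \refC{bargam} and the defining property of $\lambda$.

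For the first equality, I would unwind both sides as pullback-and-integrate operations along the composite $p \circ f : (D^n, S^{n-1}) \to (B, *)$. On the left, $\langle p^* \gamma, f\rangle_{(E,F)}$ is computed by forming $f^* p^* \gamma = (p\circ f)^* \gamma \in \E(D^n, S^{n-1})$, weight-reducing, and evaluating on the fundamental class of $D^n$. On the right, $p_* f$ factors as $f = \bar q \circ q$ where $q : D^n \to D^n/S^{n-1} = S^n$ is the collapse and $\bar q : S^n \to B$ is the induced map, and $\langle \gamma, p_* f\rangle_B$ is computed by pulling $\gamma$ back to $\bar q^* \gamma \in \E(S^n)$, weight-reducing, and integrating over $S^n$. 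The collapse $q$ induces a map $q^* : \E(S^n) \to \E(D^n, S^{n-1})$ carrying $\bar q^* \gamma$ to $(p \circ f)^* \gamma$, and the weight-reduction argument of \refL{key} is manifestly compatible with $q^*$ since $q^*$ sends weight-one cocycles to weight-one cocycles and intertwines the Leibniz and contraction differentials. Thus the two integrals coincide once we observe that $\int_{D^n} q^*(-) = \int_{S^n} (-)$ on the fundamental classes, which is simply the fact that $q$ has degree one.

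For the second equality, I would invoke \refC{bargam}: $\gamma \simeq p^* \bar\gamma$ for some $\bar\gamma \in \E(B)$. Since Hopf pairings depend only on the cohomology class,
\[
\langle \gamma, \lambda f \rangle_{(E,F)} = \langle p^* \bar\gamma, \lambda f\rangle_{(E,F)}.
\]
Applying the first equality just proved yields $\langle \bar\gamma, p_*(\lambda f)\rangle_B$. By the very definition of the boundary/lifting isomorphism, $p \circ \lambda f$ realizes $f$ under the identification $D^n/S^{n-1} \cong S^n$, so $p_*(\lambda f) = f$ in $\pi_n(B)$ and we obtain $\langle \bar\gamma, f\rangle_B$ as desired.

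The one subtlety I expect to spend the most care on is the compatibility of the relative weight-reduction procedure on $\E(D^n, S^{n-1})$ with the absolute procedure on $\E(S^n)$ through $q^*$. The key point is that $q^*$ is a chain map, that $\E(D^n, S^{n-1})$ is rank one in the relevant degree generated by a weight-one class (noted just after the definition of relative Hopf invariants), and that $q^*$ sends the generator of $H^n_{\E}(S^n)$ to the generator of $H^n_{\E}(D^n, S^{n-1})$. Once these identifications are in place, the diagrammatic argument used in the proof of \refT{splitcompat} (just a commuting square of pullbacks plus a commuting triangle of integrations) carries over verbatim.
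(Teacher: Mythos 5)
Your proposal is correct and follows essentially the same route as the paper: the first equality comes down to both sides being $\int_{D^{n}} \tau\bigl((p \circ f)^{*}\gamma\bigr)$, and the second combines \refC{bargam} with the identity $p \circ \lambda f \simeq f$. The only difference is that you make explicit (via the collapse map $q: D^{n} \to S^{n}$ and compatibility of weight reduction with $q^{*}$) the identification of the relative pairing over $D^{n}$ with the absolute pairing over $S^{n}$, which the paper leaves implicit as in its treatment of $j_{*}$ and $j^{*}$ in the preceding proposition.
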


\begin{proof}
The first equality is immediate since both $\langle p^{*} \gamma,\, f \rangle_{(E, F)}$ and 
$\langle \gamma,\, p_{*} f \rangle_{B}$ 
are equal to $\int_{D^{n}} \tau\left((p \circ f)^*\gamma\right)$.

For the second equality, we have
$$\langle \gamma,\, \lambda f \rangle_{(E, F)} = \int_{D^{n}} \tau\left((\lambda f)^* \gamma\right) 
= \int_{D^{n}} \tau\left((\lambda f)^* p^{*} \bar \gamma\right)
= \int_{D^{n}} \tau\left((p \circ \lambda f)^*\bar \gamma\right) 
= \int_{D^{n}} \tau(f^* \bar \gamma) = 
\langle \bar \gamma,\, f \rangle_{B}.$$
\end{proof}

We can now quickly prove Theorem \ref{T:complete}.

\begin{proof}[Proof of Theorem \ref{T:complete}]
Applying the perfect pairing between the $\E$ and homotopy long exact sequences of a fibration  
to the Postnikov tower of a space, it is enough to know that $\eta^\E$ gives an isomorphism on 
Eilenberg-MacLane spaces. 

Recall that $\apl K(\Q^m, n)$ is quasi-isomorphic to a free 
graded algebra with $m$ generators in degree $n$.  Lemma~\ref{L:BTV=V} gives a 
null homotopy of all elements of $\E(\Lambda s^n \Q^m)$ except for these generators.
The Hopf invariants associated to these generators, given by simply pulling back cohomology
and evaluating,  are linearly dual to $\pi_{n}(K(\Q^m, n))$ by the Hurewicz theorem.   
 \end{proof}

We need the existence but not the uniqueness of the Postnikov tower.  
On the whole, the only theorems from topology which
we use to define our functionals and prove they are 
complete are this existence and 
the fact that a fibration can be modeled by a twisted tensor
product of cochain models.  The rest of the basic input has been algebraic, namely
our development of the Lie cooperadic bar construction.

\section{Applications and open questions}

We give applications and present some questions now opened up to investigation.  
We have listed one such already,
namely the generalized Hopf invariant one question.  Our work
in the previous section gives rise to a new approach to this problem.
Namely one could try to understand how 
the classical bar construction over the integers, modulo Harrison shuffles, fails to yield exact sequence of a fibration.  
Such an approach could  give rise to a new, very different
proof of Adams' result.   Answering the question for general $X$ would yield significant insight into
the relationship between homotopy theory in characteristic zero and characteristic $p$.

\subsection{Wedges of spheres}
Revisiting Example~\ref{E:wedge1},  recall that a model 
for cochains on a wedge of spheres $X$  is just the graded vector space $W$ with one
generator $w_{i}$ in the appropriate dimension for each sphere,  with trivial products
and differential.   \refT{complete} now implies
that the linear dual of $\pi_{*}(X)$ is just the cofree Lie coalgebra on $W$.
We take  advantage of \refC{cobracket} and the perfect pairing between free Lie
algebras and cofree Lie coalgebras to deduce the Hilton-Milnor Theorem that
 the homotopy groups of $X$ are just the free Lie algebra on $W$.

We not only know what the homotopy Lie algebra is but we 
see explicitly how to determine
whether two maps into wedges of spheres are homotopic.  The vector space
$\E(W)$ is spanned by cocycles of the form 
$\gamma_{\sigma} = 
\begin{xy}
 (-10,-2)*+UR{\scriptstyle w_{\sigma(1)}}="1",
 (0,2)*+UR{\scriptstyle w_{\sigma(2)}}="2",
 (9,-2)*+UR{\cdots}="3",
 (18,2)*+UR{\scriptstyle w_{\sigma(k)}}="4",
 "1";"2"**\dir{-}?>*\dir{>},
 "2";"3"**\dir{-}?>*\dir{>},
 "3";"4"**\dir{-}?>*\dir{>}
\end{xy}
$ as $\sigma$
varies over the symmetric group.  Following Section~\ref{S:mfld} 
represent $w_{i}$ by the  
Thom class  associated to a point $p_{i}$ (away from the basepoint) of each 
wedge factor $S^{d_{i}}$.   Given some map $f: S^{n} \to X$  we let 
$W_{i} = f^{-1}(p_{i}) \subset \R^{d_{i}}$.  
The argument in Section~\ref{S:mfld} applies, and  the Hopf invariant
of $\gamma_{\sigma}$ is a linking number of the $W_{i}$,
counting the number of times (with signs) that a point in
$W_{\sigma(1)}$ lies ``directly above'' one in $W_{\sigma(2)}$ which in turn lies above
one in $W_{\sigma(3)}$, etc.  
The rational homotopy class of $f$ is determined by such counts, and can be named as an element
of the free Lie algebra using the configuration pairing.

\subsection{Homogeneous spaces}\label{S:hom}

Consider the standard Puppe sequence 
(for fibrations, as in \cite{Nomu60})
$$H \overset{i}{\to} G \overset{p}{\to} G/H \overset{f}{\to} BH \overset{Bi}{\to} BG.$$
Here $H$ and $G$ are $H$-spaces, $i$ is a map of $H$-spaces,
and $G/H$ is defined as the homotopy fiber
of $Bi$, which models the corresponding homogeneous space when $i$
is an inclusion of Lie groups.
Rationally an $H$-space is a generalized 
Eilenberg-MacLane space, which is formal with
free rational cohomology algebra.  By \refL{BTV=V},
we have for example that  $\E(G) \simeq V_{G}$, where $V_{G}$ is the vector space of  indecomposables, 
represented
as weight one cocycles.  The associated Hopf invariants are simply evaluation of these cohomology 
classes, as expected in the Eilenberg-MacLane setting.

If we also identify $\E(H) \simeq V_{H}$ then ${\rm Hom}(\pi_{*}(G/H), \Q)$, which is isomorphic 
to $H_{\E}^{*}(G/H)$ if $G/H$ is simply connected, is isomorphic to the direct sum
of the kernel and suspended cokernel of $i^{*} : V_{G} \to V_{H}$.  (Here to suspend and 
obtain classes in $H^{*}(BG)$ and $H^{*}(BH)$ we can 
for example use the transgression in the Leray-Serre spectral sequence.)
Hopf invariants associated to the 
cokernel of $i^{*}$ are simple to interpret.  A cohomology class in $V_{H}$ suspends to a cohomology class
of $BH$ which  can be pulled back
to $G/H$ and evaluated on homotopy.   This will be trivial if the class was already pulled back from $BG$.

On the other hand consider $x \in V_{G}$, which we identify with a representative weight
one cocycle in $\E(G)$, with $i^{*}(x) = 0$.  In the model $\apl G/H \otimes \Lambda V_{H}$ for 
the cochains of $G$, represent $x$ by $a_{0}\otimes 1 + \sum  a_{i} \otimes f_{i}$ with the $a_{i}$ closed
and $f_{i} \neq 1$.  
Then in $\E (\apl G/H \otimes \Lambda V_{H})$
we have 
$${x} - d_{\E}(\sum a_{i} | f_{i})  = a_{0} + \sum  a_{i} | b_{i},$$
 where $b_{i} = d f_{i}$ is in $\apl G/H$ since $\Lambda V_{H}$ has trivial differential.
 Thus $x$ is homologous to the pull-back through $p$ of $a_{0} + \sum a_{i} | b_{i} 
 \in \E(G/H)$.  (A useful exercise here is to consider the Hopf maps.)

The Hopf invariants for $G/H$ - which are complete when $G/H$ is simply connected - 
are therefore given by either evaluation of
cohomology or by classical linking invariants of the pullbacks of the Poincar\'e duals
of the $a_{i}$ and $b_{i}$ as in Section~\ref{S:mfld}.  
When $H$ and $G$ are Lie groups, one can also use de Rham  theoretic models of all of these spaces
defined through the Lie algebras of $H$ and $G$ (the antecedents of Quillen functors) for
explicit calculations.

 While
weight one cocycles account for the Hopf invariants of a generalized Eilenberg-MacLane
space, weight two cocycles were required here.  In general the highest
weight which appears in  $H^{*}_{\E}(X)$ should reflect the number of stages in the smallest
tower atop which the rationalization of
$X$ sits whose fibers are generalized Eilenberg-MacLane spaces.

 \subsection{Configuration spaces}
Next consider the ordered configuration space $\rm{Conf}_{n}(\R^{d})$, with
$n=3$.  Its cohomology algebra is generated
by classes $a_{12}$, $a_{13}$ and $a_{23}$ where for example $a_{12}$
is pulled back from the map which sends $(x_{1}, x_{2}, x_{3})$  to 
$\frac{x_{2} - x_{1}}{||x_{2} - x_{1}||} \in S^{d-1}$.  These cohomology
classes satisfy the Arnold identity $a_{12}a_{23} + a_{23}a_{31} + a_{31}a_{12} = 0$
(the same identity at the heart of our approach to Lie coalgebras, an overlap
which we explain below).   Its homotopy Lie algebra is generated
by classes whose images in homology are dual, which we denote $b_{ij}$,
for which ``$x_{i}$ and $x_{j}$ orbit each other.''   Their Lie brackets
satisfy the identity that $[b_{12}, b_{23}] = [b_{23}, b_{31}] = [b_{31}, b_{12}]$.  

The best way to construct Hopf invariants is not with the standard $a_{ij}$
since their products are non-trivial at the cochain level.  The simplest cocycle in 
in $\E( \rm{Conf}_{3}(\R^{d}))$ using these cochains would be
$$\linep{a_{12}}{a_{23}} + \linep{a_{23}}{a_{31}} + \linep{a_{31}}{a_{12}} 
+\ \overset{\theta}{\bullet},$$
where $\theta$ is a cochain cobounding the Arnold identity.
Consider instead the submanifold
of points $(x_{1}, x_{2}, x_{3})$ which are collinear.  This submanifold has three 
components, which we label by which $x_{i}$ is 
``in the middle.''  These are proper submanifolds, so we consider their Thom
classes, which we denote $Co_{i}$.  By intersecting these with the cycles
representing the homology generators $b_{ij}$ we see that for example
$Co_{1}$ is  cohomologous to $a_{31} + a_{12}$.  Because they
are disjoint, we have that any graph 
$\begin{xy}       
  (0,-2)*{\scriptstyle Co_i}="a",
  (6,2)*{\scriptstyle Co_j}="b", 
  "a";"b"**\dir{-}?>*\dir{>},    
\end{xy}$    
is a cocycle in $\E(\rm{Conf}_{n}(\R^{d}))$.  We may use
\refC{cobracket} to see for example that 
$$\begin{aligned}
\left\langle 
 \begin{xy}       
  (0,-2)*{\scriptstyle Co_1}="a",
  (6,2)*{\scriptstyle Co_2}="b", 
  "a";"b"**\dir{-}?>*\dir{>},    
 \end{xy},\    
     [b_{12}, b_{23}] \right\rangle_{\eta}
  &= (a_{31} + a_{12})(b_{12})\cdot (a_{12} + a_{23})(b_{23}) 
     +  (a_{12} + a_{23})(b_{12})  \cdot ( a_{31} + a_{12})(b_{23}) \\
  &= 1 \cdot 1 + 1 \cdot 0 = 1.
\end{aligned}$$
Thus, to understand an element $f$ of $\pi_{2d-3}( \rm{Conf}_{3}(\R^{d}))$
it suffices to understand the linking behavior of $f^{-1}(Co_{i})$.
This calculation is reflected in the results of \cite{BCSS05}, which
used relative Hopf invariants of an evaluation (or Gauss) map
for knots to give a new interpretation of the simplest finite-type knot invariant.
Understanding the framework of how such homotopy invariants can be defined and evaluated on
more complicated Whitehead products is a primary motivation for our development
of Hopf invariants.

\subsection{The graph complex}

Though it played only an incidental role in our current development, all of our work in
the bar complex and some of our work in the Lie coalgebra complex can be shifted onto 
the graph complex $\mathcal{G}(X)$.  
In fact, from the point of view of $\E(X)$, the graph complex is 
a much more natural home for the development of Hopf invariants than the bar complex.

Constructions and proofs proceed in the same manner as those for the bar complex,
with arguments establishing basic properties applying verbatim.
Equipping the graph complex with the anti-cocommutative graph-cutting cobracket,
the proof of 
\refT{cobracket} applies to give compatibility with Whitehead products.  Note that
\refT{cobracket} requires only a component bigrading and internal differential argument.
In the graph complex, components are given by maximal size subgraphs.

Furthermore, since the quotient map $\mathcal{G}(S^n) \to \E(S^n)$ is the identity 
on weight one, the proof of \refT{splitcompat} applies to show that 
$\eta^{\mathcal{G}} = \eta^{\E} \circ p$.  
However, if we wished to use $\eta^{\mathcal{G}}$ to show that $\eta^{\E}$ is well defined
we would either need an alternate, direct, proof that Hopf invariants vanish on 
arrow reversing and Arnold expressions, or else we would need a splitting of the 
quotient map $\mathcal{G}(A) \to \E(A)$, whose existence is open at the moment.
In modern terminology, the splitting of the quotient
$B(A) \to \E(A)$ is given by
the first Eulerian idempotent $e^{(1)}_*$ (\cite{Loda98} \S 4.5). 
Together with the other Eulerian idempotents,
this splits the bar complex into the Hodge decomposition.  It is tempting to seek
a similar decomposition of the graph complex, which might give an 
alternate way of understanding the Eulerian idempotents.  

\subsection{The non-simply-connected setting}

In future work we plan to extend these results beyond the simply connected case.
One main reason we consider only simply connected spaces here is that Lie coalgebras
are not well-understood outside of the (simply) connected setting, unless 
there is a nilpotence condition in effect.  In \cite{Walt08}, the first author is putting
coalgebras on sounder footing.  We expect results even beyond the nilpotent setting, 
as we have seen in preliminary calculations that Hopf invariants, while more delicate
to define, do distinguish the rational homotopy
groups of $S^{1} \vee S^{2}$.

It would be interesting to compare our Hopf invariants with homotopy invariants
coming from $H^{0}$ of the generalized Eilenberg-Moore spectral sequence for
${\rm Map}(X, Y)$.  Such a comparison might be useful in extending our techniques
to  give invariants of homotopy classes of maps
from $X$ to $Y$ more generally.

\section{Comparison with other approaches}\label{S:compare}

Our theorems give a resolution of the ``homotopy period'' problem
of explicitly representing homotopy functionals, a classical question considered many times over the past seventy-five years.  A formula for homotopy periods using the same ingredients, namely pulling back forms to the sphere, taking $\dinv$ and wedge products, and then integrating,  was featured on the first
page and then in Section~11
of Sullivan's seminal paper \cite{Sull77}.  
We have not been able to compare our formula with Sullivan's, since that
formula comes from a minimal 
model for path spaces which uses a chain homotopy between an isomorphism and the zero map
which is also a derivation.
In some examples, we cannot find such a chain homotopy, and if one relaxes
the condition of being a derivation then the inductive formula is not clear.

In the same vein,  some of the lower weight cases of our constructions were treated
in work of Haefliger \cite{Haef78} and Novikov \cite{Novi88}.  Haefliger in particular gave
formulae for Hopf forms which are special cases of Examples~\ref{E:arbwt1}~and~\ref{E:wt2}.
He also gives formulae for evaluating these forms on Whitehead products,
which of course follow from our \refT{cobracket}.  Haefliger's comment was that ``It is
clear that one could continue this way for higher Whitehead products, if one is not afraid of
complicated formulas.''   The bracket-cobracket formalism and the configuration pairing
make these formulae simple, at least conceptually.  Haefliger uses Hopf invariants associated to cocycles 
in $\E(X)$ of the form $\omega_{1} | \cdots | \omega_{n}$ where $\omega_{i} \omega_{i-1} = 0$
to show that such $X$ have summands of the corresponding
free Lie algebra in their homotopy groups, which is also immediate from our approach.

As mentioned above, Hain's thesis \cite{Hain84} solves the rational homotopy period 
using Chen integrals.  
Here one appeals to the Milnor-Moore theorem that
identifies rational homotopy within loopspace homology \cite{MiMo65}, so that dually loopspace
cohomology must give all homotopy functionals.  The main work is to find an irredundant
collection.  The geometric heart of Chen integrals are the evaluation maps
$\Delta^{n} \times \Omega X \to X^{n}$ which evaluate a loop at $n$ points.  Thus when writing down these functionals explicitly, the integrals are
over $S^{n} \times \Delta^{k}$ not $S^{n}$ itself.
Our approach is thus distinct and more intrinsic.  Indeed,
understanding homotopy solely through loopspace homology is understanding a Lie algebra
through its universal enveloping algebra.  

As mentioned in \ref{S:mfld}, our Hopf invariants also connect with those 
of Boardman and Steer \cite{BoSt66, BoSt67}.  
Their Hopf invariants are maps $\lambda_{n} : [\Sigma Y,\, \Sigma X] \to [\Sigma^{n} Y,\, \Sigma
X^{\wedge n}]$.  In some cases, the latter group can be computed, yielding homotopy
functionals.  A key intermediate they take is defining
a map $\mu_{n}: [\Sigma Y,\, \bigvee_{i=1}^{n} X_{i}]
\to [\Sigma^{n} Y,\, \bigwedge_{i=1}^{n} X_{i}]$.  Comparing the proof of Theorem~6.8 of 
\cite{BoSt67} with our \ref{S:mfld} we see that their functionals agree with 
ours in this setting.  While their technique is more general in the direction of analyzing maps out
of any suspension as opposed to only spheres, 
our approach allows analysis of homotopy groups of  spaces which are not  suspensions.  
Indeed, the behaviour
$\E(\Sigma X)$ is fairly trivial, since finite sets of cochains may be assumed to have
disjoint support, so the bar complex collapses. 

Finally, we follow up on Example~\ref{E:wedge1}  where we showed that the value
of one of our Hopf invariants on a Whitehead product is given by the configuration
pairing between free Lie algebras and coalgebras.  This pairing 
appears  in the homology and cohomology of ordered configuration
spaces.  Recall from Section~\ref{S:mfld}  that if $\{ W_{i} \}$ is a
disjoint collection of proper submanifolds of $X$ and $f$ is a map $S^{d} \to X$, 
then our Hopf invariants are encoding the map on homology induced by inclusion of 
$\prod f^{-1}(W_{i})$ in ${\rm Conf}_{k}(\R^{d})$.  
As Fred Cohen often says, ``this cannot be a coincidence.'' In this case the
connection is readily explained through Cohen's work on the homology of iterated 
loopspaces. 

Consider $Y = \bigvee_{i=1}^{k} S^{d_{i} + 2}$ and let $n = d_{1} + \cdots + d_{k} + k -1$,
the dimension in which one can see a Whitehead product $P$ of the inclusion maps $\iota_{i}$ with
each map appearing once.  Then $\pi_{n}(Y) \cong \pi_{n-2}(\Omega^{2} \Sigma^{2} 
(\bigvee S^{d_{i}}) ) $ which maps to $H_{n-2} (\Omega^{2} \Sigma^{2} 
(\bigvee S^{d_{i}}) )$ under the Hurewicz homomorphism.    Loopspace
theory identifies this homology with that of 
$\bigcup_{m}{\rm Conf}_{m}(\R^{2}) 
\wedge_{\Sigma_{m}} (S^{d_{1}} \vee \cdots \vee S^{d_{k}})^{\wedge m}$, which  in turn has
a summand coming from the homology of 
${\rm Conf}_{k}(\R^{2}) \times_{\Sigma_{k}} \left( \bigcup_{\sigma \in \Sigma_{k}} 
S^{d_{\sigma(1)}} \wedge \cdots \wedge S^{d_{\sigma(k)}} \right)$.  But since the $\Sigma_{k}$ action
is free on the union of products of spheres, we get that this is simply 
$$H_{n-2}({\rm Conf}_{k}(\R^{2}) \wedge S^{\sum d_{i}}) \cong H_{k-1}({\rm Conf}_{k}(\R^{2})) \cong
\lie(n).$$  (See \cite{Sinh06} for an expository treatment of this last isomorphism.)
In Remark~1.2 of \cite{CLM76},  established at the end of Section~13, Cohen gives
a diagram establishing the compatibility of Whitehead products and Browder brackets,
through the Hurewicz homomorphism.  This result
implies that  a Whitehead product $P$ maps to this $\lie(k)$ summand in the canonical way,
going to the homology class with the same name (up to sign).

In Example~\ref{E:wedge1}, we showed that the Hopf invariant $\eta_{\gamma}$ evaluated
on $P$ according to the configuration pairing between free Lie algebras and coalgebras,
modeled in the operad/cooperad pair $\lie$ and $\eil$.  Thus a cohomology
class $g \in \eil(k) \cong H^{k-1}(\rm{Conf}_{k}(\R^{2}))$ represented by a graph
(see \cite{Sinh06})  gets mapped 
under the linear dual of the Hurewicz
homomorphism to essentially the same graph (with each vertex label replaced by a volume form
on the corresponding sphere) in $\E(Y)$.  

In summary, for wedges of spheres
and other double-suspensions our Hopf invariants coincide with applying the Hurewicz
homomorphism and then evaluating on the cohomology of configuration spaces, which explains
the combinatorial similarity between our Hopf invariants and that cohomology.
At one point we outlined a proof of \refC{cobracket} using this kind of 
loopspace machinery, but we
later found the more elementary approach given here.

 \bibliographystyle{amsplain}
 \bibliography{references}

\end{document}